\DeclareMathAlphabet{\mathpzc}{OT1}{pzc}{m}{it}
\newtheorem{thm}[equation]{Theorem}\newtheorem{Thm}[equation]{Theorem}
\newtheorem{Rmk}[equation]{Remark}
\newtheorem{prop}[equation]{Proposition}
\newtheorem{cor}[equation]{Corollary}
\newtheorem{lem}[equation]{Lemma}
\newtheorem{lemma}[equation]{Lemma}
\numberwithin{equation}{section}
\numberwithin{equation}{section}
\newcommand{\be}{begin{equation}}
\newcommand{\bH}{\mathbb H}
\newcommand{\e}{\epsilon}
\newcommand{\br}{\mathbb{R}}
\newcommand{{\grinv}}{{\Cal G}^{-r}}
\newcommand{\ba}{\backslash}
\newcommand{\G}{\Gamma}
\newcommand{\Cal}{\mathcal}
\newcommand{\la}{\langle}
\newcommand{\ra}{\rangle}
\newcommand{\bp}{\begin{pmatrix}}
\newcommand{\ep}{\end{pmatrix}}
\renewcommand{\be}{\begin{equation}}
\newcommand{\ee}{\end{equation}}
\renewcommand{\bp}{{\rm bp}}
\newcommand{\SO}{\operatorname{SO}}
\renewcommand{\L}{\Cal L}
\newcommand{\PS}{\op{PS}}
\newcommand{\norm}[1]{\lVert #1 \rVert}
\newcommand{\op}{\operatorname}\newcommand{\supp}{\operatorname{supp}}
\newcommand{\BR}{\operatorname{BR}}
\newcommand{\ga}{\gamma}
\newcommand{\F}{\mathcal F}
\newcommand{\La}{\Lambda}
\renewcommand{\i}{\op{i}}
\def\scrB{{\mathcal B}}
\def\scrL{{\mathcal L}}
\def\e{\mathrm{e}}
\def\i{\mathrm{i}}
\def\SO{\operatorname{SO}}
\def\supp{\operatorname{supp}}
\newcommand{\GaG}{\Gamma\backslash G}
\newcommand{\pc}{P^{\circ}}
\newcommand{\fa}{\mathfrak{a}}
\newcommand{\Ga}{\Gamma}\newcommand{\bb}{\mathbb}
\newcommand{\cal}{\mathcal}
\renewcommand{\e}{\varepsilon}
\renewcommand{\epsilon}{\e}
\newcommand{\dg}{D_\Gamma^{\star}}
\newcommand{\inte}{\op{int}}
\renewcommand{\la}{\lambda}
\renewcommand{\dg}{D_{\Gamma}^{\star}}
\begin{document}

\title[Uniqueness of Conformal measures]{Uniqueness of Conformal measures  and local mixing
for Anosov groups}

\author{Sam Edwards, Minju Lee, and Hee Oh}
\address{School of Mathematics, University of Bristol, BS8 1QU, Bristol}
\address{School of mathematics, Institute for Advanced Study, Princeton, NJ 08540}
\address{Mathematics department, Yale university, New Haven, CT 06520}
\address{}

\begin{abstract} In the late seventies, Sullivan showed that
for a convex cocompact subgroup $\Ga$ of $\SO^\circ(n,1)$ with critical exponent $\delta>0$, any $\Ga$-conformal measure on $\partial \bH^n$ of dimension $\delta$ is necessarily supported on the limit set $\Lambda$ and that the conformal measure of dimension $\delta$ exists uniquely. We prove an analogue of this theorem for any Zariski dense Anosov subgroup $\Ga$ of a connected semisimple real algebraic group $G$ of rank at most $3$. We also obtain the local mixing for generalized BMS measures on $\Ga\ba G$
including Haar measures.
\end{abstract}

\email{samuel.edwards@bristol.ac.uk}
\email{minju@ias.edu}
\email{hee.oh@yale.edu}
\thanks{Edwards, Lee and Oh respectively
supported by funding from the Heilbronn Institute for Mathematical Research, 
 and the NSF grant No. DMS-1926686 (via the
Institute for Advanced Study), and the NSF grant No. DMS-1900101.
}

\maketitle

{\footnotesize
\mbox{ }\hfill Dedicated to Gopal Prasad on the occasion of his 75th birthday with respect \quad\quad\quad}\\
%{\footnotesize
%\mbox{ }\hfill with respect.}

%\tableofcontents
\section{Introduction}
 Let $(X,d)$ be a Riemannian symmetric space of rank one
and $\partial X$ the geometric boundary of $X$. Let $G
=\op{Isom}^+ X$ denote the group of orientation preserving isometries and $\G<G$ a non-elementary discrete subgroup.
Fixing $o\in X$,
a Borel probability measure $\nu$ on $\partial X$ is called a $\Ga$-conformal measure of dimension $s>0$ if
for all $\ga\in\Ga$ and $\xi\in\partial X$,
$$
\frac{d\ga_* \nu}{d\nu}(\xi) =e^{s(\beta_\xi (o, \gamma o))}
$$
 where $\beta_\xi(x,y)=\lim_{z\to\xi}d(x,z)-d(y,z)
$ denotes the Busemann function.

Let $\delta>0$ denote the critical exponent of $\Ga$, i.e., the abscissa of the convergence of the Poincare series $\sum_{\ga\in \Ga} e^{-s d(\ga o, o)}$. 
The well-known construction of
Patterson and Sullivan (\cite{Pa}, \cite{Su}) provides a $\Ga$-conformal measure of dimension $\delta$ supported on the limit set $\La$, called the Patterson-Sullivan ($\PS$) measure.
A discrete subgroup $\Ga<G$ is called \textit{convex cocompact} if  $\Ga$ acts cocompactly on some nonempty convex subset of $X$. 
\begin{thm}[Sullivan] \cite{Su}\label{sul} If $\G$ is convex cocompact, then any $\G$-conformal measure on $\partial X$ of dimension $\delta$ is necessarily supported on $\La$. Moreover, the $\PS$-measure is the unique $\Ga$-conformal measure of dimension $\delta$.
\end{thm}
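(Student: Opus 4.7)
The plan is to follow Sullivan's original strategy, which rests on two pillars: a shadow lemma for arbitrary $\delta$-conformal measures, and ergodicity of the $\Gamma$-action on $\Lambda$ via the Hopf--Tsuji--Sullivan dichotomy. Fix $o\in X$, and for $R>0$ and $x\in X$ let $O_R(x)\subset \partial X$ denote the shadow from $o$ of the ball $B(x,R)$.

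The first and key step is to prove the shadow lemma: for any $\Gamma$-conformal measure $\nu$ of dimension $\delta$ and for $R$ sufficiently large,
$$ \nu(O_R(\gamma o)) \asymp e^{-\delta d(o,\gamma o)} \quad \text{for every } \gamma\in\Gamma, $$
with implied constants depending only on $R$ and $\Gamma$. The upper bound follows directly from the conformal relation $d\gamma_*\nu/d\nu=e^{\delta\beta_{(\cdot)}(o,\gamma o)}$ together with the standard estimate $\beta_\xi(o,\gamma o)=d(o,\gamma o)+O(R)$ for $\xi\in O_R(\gamma o)$. For the lower bound, the identity
$$ \nu(\gamma^{-1}O_R(\gamma o)) = \int_{O_R(\gamma o)} e^{\delta \beta_\xi(o,\gamma o)}\,d\nu(\xi) $$
reduces matters to a uniform positive lower bound for the $\nu$-mass of shadows of $B(o,R)$ seen from the translated basepoint $\gamma^{-1}o$; this is where convex cocompactness is essential, for it allows one to push $\gamma^{-1}o$ into a fixed compact fundamental domain for the convex core, so that the relevant shadow contains a uniform open set in $\Lambda$ of positive $\nu$-mass.

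From the shadow lemma I would extract two consequences. First, the Poincar\'e series diverges at $s=\delta$: the shadows $\{O_R(\gamma o)\}$ have bounded overlap on $\partial X$ and cover each conical limit point infinitely often, so convergence would force the set of conical points to have $\nu$-measure zero, contradicting the Borel--Cantelli consequence of the shadow lemma. Second, $\supp\nu\subset\Lambda$: by Borel--Cantelli again, $\nu$-a.e.\ point of $\partial X$ is conical, and for convex cocompact $\Gamma$ every conical limit point lies in $\Lambda$.

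For uniqueness, given two such probability measures $\nu_1,\nu_2$, the shadow lemma yields $\nu_1\asymp \nu_2$ on each $O_R(\gamma o)$, whence $\nu_1$ and $\nu_2$ are mutually absolutely continuous with essentially bounded Radon--Nikodym derivative $f=d\nu_1/d\nu_2$. A short calculation using the conformality of both measures shows $f\circ \gamma = f$ a.e.\ for every $\gamma\in\Gamma$. Ergodicity of $(\Lambda,\nu_2,\Gamma)$ -- the divergence half of the Hopf--Tsuji--Sullivan dichotomy applied to the Poincar\'e series divergence just established -- then forces $f$ to be $\nu_2$-a.e.\ constant, so $\nu_1=\nu_2$ after normalization. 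The main technical obstacle is the lower bound in the shadow lemma, which a priori requires $\nu(\Lambda)>0$; the pathological alternative $\nu(\Lambda)=0$ is ruled out by a direct Poincar\'e summation argument on a fundamental domain for $\Gamma$ in the domain of discontinuity $\Omega=\partial X\setminus\Lambda$, where convex cocompactness of $\Gamma$ again provides the necessary uniform geometric control.
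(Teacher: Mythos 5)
Your overall architecture is the classical Sullivan strategy (the paper itself does not prove this statement but cites \cite{Su}; its own new work is the higher rank analogue via local mixing and the dichotomy of \cite{BLLO}), but two steps are justified in a way that does not work as written. First, the lower bound in your shadow lemma for an \emph{arbitrary} $\delta$-conformal measure $\nu$ is circular: you argue that the shadow of $B(o,R)$ seen from $\gamma^{-1}o$ ``contains a uniform open set in $\Lambda$ of positive $\nu$-mass,'' but a uniform positive lower bound for the $\nu$-mass of sets meeting $\Lambda$ is essentially the conclusion of part one of the theorem, not an available hypothesis; a priori $\nu$ could give arbitrarily small (or zero) mass near $\Lambda$. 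The correct general argument is different: the complement of that shadow is a small visual ball centered at a point close to $\Lambda$, and a compactness argument shows the lower bound can only fail if $\nu$ degenerates to a Dirac mass at a limit point, which is impossible for a non-elementary (here Zariski dense) $\Gamma$ acting conformally with the same dimension. Your proposed patch (first ruling out $\nu(\Lambda)=0$) does not by itself produce the uniform open-set bound you invoke, though it could be repaired by restricting $\nu$ to $\Lambda$ and renormalizing.

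Second, the assertion ``by Borel--Cantelli again, $\nu$-a.e.\ point of $\partial X$ is conical'' is not a Borel--Cantelli argument. The elementary Borel--Cantelli lemma only yields null sets from \emph{convergent} series; deducing full $\nu$-measure of the conical limit set from \emph{divergence} of the Poincar\'e series is precisely the hard half of the Hopf--Tsuji--Sullivan dichotomy (or needs a quasi-independence estimate for shadows of Kochen--Stone type), and it presupposes the shadow-lemma lower bound whose justification is the issue above. Relatedly, your divergence argument should be run with the Patterson--Sullivan measure (known to be supported on $\Lambda$, all of whose points are conical by convex cocompactness); for a general $\nu$ the contradiction you invoke is not yet available. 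Note, finally, that the remark you leave to the last sentence is actually the cleanest route to the support statement: once divergence at $\delta$ is established via the PS measure, summing $\nu(\gamma F)\gtrsim e^{-\delta d(o,\gamma o)}\nu(F)$ over $\Gamma$ for a compact $F\subset\Omega$ (bounded multiplicity by proper discontinuity, uniform Gromov-product bounds since $F$ is compactly contained in $\Omega$) forces $\nu(F)=0$ and hence $\nu(\Omega)=0$ directly, with no shadow-lemma lower bound or conical-full-measure statement for $\nu$ needed; the shadow comparison, $\Gamma$-invariance of the Radon--Nikodym derivative, and HTS ergodicity then give uniqueness as you describe.
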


In this paper, we extend this result to Anosov subgroups, which may be regarded as  higher rank analogues of convex cocompact subgroups of rank one groups.
Let $G$ be a connected semisimple real algebraic group and $P$ a minimal parabolic subgroup of $G$.
Let $\F:=G/P$ be the Furstenberg boundary, and $\cal F^{(2)}$ the unique open $G$-orbit in $\cal F\times\cal F$ under the diagonal action of $G$.
In the whole paper, we let $\G$
be a Zariski dense Anosov subgroup of $G$ with respect to $P$. This means that there exists a 
representation $\Phi: \Sigma \to G$ of a Gromov hyperbolic group $\Sigma$ with $\Ga=\Phi(\Sigma)$, which induces a continuous equivariant map $\zeta$ from the Gromov boundary $\partial \Sigma$ to $\cal F$ such that $(\zeta(x), \zeta(y))\in \F^{(2)}$ for all $x\ne  y\in \partial \Sigma$. This definition is due to
 Guichard-Wienhard \cite{GW}, generalizing that of Labourie \cite{La}.

Let $A<P$ be a maximal real split torus of $G$ and $\frak a:=\op{Lie}(A)$.
Given a linear form $\psi\in \mathfrak a^*$,
a Borel probability measure $\nu$ on $\F$ is called a $(\Gamma, \psi)$-conformal measure if, for any $\ga\in \G$ and $\xi\in \F$,
 \be\label{gc0} \frac{d\ga_* \nu}{d\nu}(\xi) =e^{\psi (\beta_\xi (e, \gamma))}\ee
where $\beta$ denotes the $\frak a$-valued Busemann function (see \eqref{Bu} for the definition).
Let $\La\subset \F$ denote the limit set of $\Ga$, which is the unique $\Gamma$-minimal subset (see \cite{Ben}, \cite{LO}). A $(\Ga ,\psi)$-conformal measure {\it{supported on}} $\La$ will be called a $(\Ga, \psi)$-$\PS$ measure. Finally,
a $\Gamma$-PS measure means a $(\Ga, \psi)$-PS measure for some $\psi\in \fa^*$.

Fix a positive Weyl chamber $\frak a^+\subset\frak a$ and let $\L_\Ga\subset \fa^+$ denote the limit cone of $\Ga$. Benoist \cite{Ben} showed that $\L_\Ga$ is a convex cone with non-empty interior, using the well-known theorem of Prasad \cite{Pr} on the existence of an $\br$-regular element in any Zariski dense subgroup of $G$.
Let $\psi_\Ga:\fa\to \br\cup \{-\infty\}$ denote the growth indicator function of $\Ga$ as defined in \eqref{def.GI}. Set
\be \label{ddgg} D_\Gamma^\star:=\{\psi\in \fa^*:\psi\ge \psi_\Ga,\;  \psi(u)= \psi_\Ga(u)\text{
for some  $u\in \mathcal L_\Ga\cap  \op{int} \fa^+ $}\}.\ee
As $\Ga$ is Anosov, for any $\psi\in \dg$,
there exist a unique unit vector $u\in \inte\L_\Ga$,
 such that
$\psi(u)=\psi_\Ga(u)$, and a unique $(\Gamma, \psi)$-$\PS$ measure $\nu_{\psi}$.
Moreover, this gives bijections among
$$\dg \simeq \{u\in \inte\L_\Ga:\|u\|=1\} \simeq \{\Ga\text{-PS measures on $\La$}\}$$
(see \cite{ELO}, \cite{LO}). When $G$ has rank one, $\dg=\{\delta\}$.
Therefore the following generalizes Sullivan's theorem \ref{sul}.
We denote the real rank of $G$ by $\op{rank}G$, i.e.,
$\op{rank}G=\text{dim}\,\fa$.
\begin{thm}\label{main} Let $\op{rank}G\leq 3$.
For any $\psi\in \dg$, any $(\Ga, \psi)$-conformal measure on $\F$
is necessarily supported on $\La$. Moreover, the
$\PS$ measure $\nu_\psi$ is
the unique $(\Ga, \psi)$-conformal measure on $\F$.
\end{thm}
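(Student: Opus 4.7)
The plan is to mimic Sullivan's two-step strategy in the higher-rank Anosov setting. In the classical rank-one argument one first uses the shadow lemma plus a Borel--Cantelli argument to force any critical-dimension conformal measure onto the limit set, and then exploits ergodicity/mixing of the geodesic flow for the BMS measure to conclude uniqueness. Here I follow the same template, with the $\R$-action $\{a_t=\exp(tu)\}$ replacing the geodesic flow, where $u\in\inte\L_\Ga$ is the unique unit vector satisfying $\psi(u)=\psi_\Ga(u)$ provided by $\psi\in\dg$; and with the local mixing of generalized BMS measures on $\Ga\ba G$ (the paper's companion theorem) replacing mixing of the geodesic flow.

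\textbf{Step 1 (support on $\La$).} Let $\nu$ be an arbitrary $(\Ga,\psi)$-conformal measure on $\F$. The conformality \eqref{gc0}, combined with a higher-rank shadow lemma for the PS measure $\nu_\psi$, gives an upper bound $\nu(\mathcal O_R(\ga o))\lesssim e^{-\psi(\mu(\ga))}$ uniformly in $\ga\in\Ga$, where $\mu:G\to\fa^+$ is the Cartan projection and $\mathcal O_R$ denotes the shadow of radius $R$. The Anosov property concentrates $\mu(\ga)$ in a narrow cone around the ray $\R_{\ge 0}u$, and for $\psi\in\dg$ the Poincar\'e series $\sum_{\ga\in\Ga}e^{-\psi(\mu(\ga))}$ sits at critical exponent $1$ and is of divergence type. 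A standard Borel--Cantelli argument over shadows ordered by $\psi(\mu(\ga))$ then shows that $\nu$-a.e.\ $\xi\in\F$ is a $u$-conical limit point of $\Ga$, whence $\supp\nu\subset\La$.

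\textbf{Step 2 (uniqueness).} Given two $(\Ga,\psi)$-conformal measures $\nu_1,\nu_2$ on $\La$, pair each with the fixed $(\Ga,\psi\circ\i)$-PS measure $\nu_{\psi\circ\i}$ (where $\i$ is the opposition involution) and transport through the Hopf parametrization $\FF\times\fa\to \Ga\ba G$ to build $A$-invariant Radon measures $m_{\nu_1}$ and $m_{\nu_2}$ on $\Ga\ba G$. Both are finite generalized BMS measures in the Anosov, $\psi\in\dg$ regime. Applying the local mixing theorem to each $m_{\nu_i}$ paired with the canonical BMS measure $m_{\nu_\psi}$ gives matching asymptotic formulas for $\int f(a_tx)g(x)\,dm_{\nu_i}(x)$, with the same rate of decay in $t$ and the same dependence on $g$, forcing $m_{\nu_1}=c\cdot m_{\nu_2}$ for some $c>0$. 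Disintegrating this identity along the fiber of $\Ga\ba G\to\Ga\ba\F$ recovers $\nu_1=c\cdot \nu_2$ on $\La$, and probability normalization yields $c=1$. Taking $\nu_1=\nu$ and $\nu_2=\nu_\psi$ completes the theorem.

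\textbf{Main obstacle.} The essential input is local mixing for $m_\nu$ when $\nu$ is only assumed $(\Ga,\psi)$-conformal rather than a priori equal to $\nu_\psi$. Its proof requires an asymptotic expansion of matrix coefficients of the $A$-action, obtained via a spectral analysis of the transfer operator associated with the Markov coding of the Anosov flow. The rank $\le 3$ restriction is a constraint of this spectral analysis: the codimension of the walls of the Weyl chamber along which the relevant oscillatory integrals degenerate is tractable only in low rank, where stationary-phase estimates yield an explicit polynomial decay rate for the matrix coefficients. A secondary technical point is finiteness of $m_\nu$ without the full Patterson--Sullivan theory for $\nu$ yet in hand; this is handled by coupling $\nu$ with the already-known $\nu_{\psi\circ\i}$ and invoking the Anosov-type recurrence estimates.
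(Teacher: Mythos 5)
Your Step 1 has a genuine gap, and it is located exactly where the hypothesis $\op{rank}G\le 3$ must do its work. You run Borel--Cantelli off the divergence of the full Poincar\'e series $\sum_{\ga\in\Ga}e^{-\psi(\mu(\ga))}$, after asserting that the Anosov property concentrates $\mu(\ga)$ in a narrow cone around $\R_{\ge0}u$. That assertion is false: by Benoist the limit cone $\L_\Ga$ of a Zariski dense (Anosov) subgroup has nonempty interior, and $\mu(\Ga)$ spreads over all of it. What is actually needed is the divergence of the \emph{directional} series $\sum_{\ga\in\Ga_{u,R}}e^{-\psi(\mu(\ga))}$, $\Ga_{u,R}=\{\ga:\norm{\mu(\ga)-\br u}<R\}$, which via the higher-rank Hopf--Tsuji--Sullivan dichotomy of Burger--Landesberg--Lee--Oh is equivalent to $\lambda_\psi(\La_u)=1$ for any $(\Ga,\psi)$-conformal $\lambda_\psi$ (the paper makes this equivalence usable by first proving that all the relevant generalized BMS measures are $u$-balanced, which is where the local mixing Theorem \ref{thm.GE} enters). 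The divergence of this directional series is precisely the statement that holds only for $\op{rank}G\le 3$ (\cite[Thm.~6.3]{BLLO}): heuristically the local limit factor $t^{-(r-1)/2}$ makes the directional series behave like $\sum t^{-(r-1)/2}$, which diverges iff $r\le 3$; in rank $\ge 4$ it can converge even though the full series diverges, which is exactly why the paper states the open problem. Your sketch never distinguishes the two series and never uses the rank hypothesis in Step 1, so as written it does not prove the statement. Relatedly, your ``main obstacle'' paragraph misattributes the rank restriction: local mixing for $m^*_{\lambda_1,\lambda_2}$ with merely conformal $\lambda_i$ is proved in the paper in \emph{all} ranks, by transporting the Chow--Sarkar local mixing of the PS--BMS measure through the thickening arguments of Propositions \ref{p1} and \ref{p2}; no transfer-operator or stationary-phase analysis and no low-rank constraint occur there.

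Step 2 also needs repair, though it is less central. The generalized BMS measures are not finite in rank $\ge 2$ (the very presence of the divergent normalization $\Psi(t)=t^{(r-1)/2}$ in the local mixing statement reflects this), and the claim that matching correlation asymptotics ``force $m_{\nu_1}=c\,m_{\nu_2}$'' is not justified: the two asymptotic formulas involve the distinct measures $m_{\nu_i,\nu_{\psi\circ\i}}$ on both sides, so no direct comparison of $\nu_1$ with $\nu_2$ drops out. The paper avoids this entirely: once $\lambda_\psi$ is known to charge $\La_u\subset\La$, uniqueness is the already-established uniqueness of the $(\Ga,\psi)$-PS measure for Anosov groups (\cite[Thm.~1.3]{LO}), quoted rather than reproved. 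If you want to keep a self-contained Step 2 you would need an actual argument (e.g.\ ergodicity of a suitable measure class or the classification of horospherical-invariant measures), not the proportionality claim as stated.
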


Our proof of Theorem \ref{main} is obtained by combining
the rank dichotomy
theorem established by Burger, Landesberg, Lee, and Oh \cite{BLLO}
and the local mixing property of a generalized Bowen-Margulis-Sullivan measure (Theorem \ref{thm.GE}), which generalizes our earlier work \cite{ELO}. Indeed,
our proof yields that under the hypothesis of Theorem \ref{main}, any $(\Ga, \psi)$-conformal measure on $\F$
is supported on the $u$-directional radial limit set
$\La_u$ (see \eqref{lau}) where $\psi(u)=\psi_\Ga(u)$.

\medskip

We end the introduction by the following:

\noindent{\bf Open problem:}
Is Theorem \ref{main} true without the hypothesis $\text{rank } G\le 3$?

\section{Local mixing of
Generalized Bowen-Margulis-Sullivan measures}\label{s1}
Let $G$ be a connected semisimple real algebraic group and $\Gamma<G$ a Zariski dense discrete subgroup. Let $P=MAN$ be a minimal parabolic subgroup of $G$ with fixed Langlands decomposition so that $A$ is a maximal real split torus, $M$ is the centralizer of $A$ and $N$ is the unipotent radical of $P$.

In \cite[Prop. 6.8]{ELO}, we proved that local mixing of a BMS-measure on $\Ga\ba G/M$ implies local mixing of the Haar measure on $\Gamma\ba G/M$. In this section, we provide a generalized version of this statement, where we replace the Haar measure by any generalized BMS-measure and also work on the space $\Gamma\ba G$, rather than on $\Gamma\ba G/M$. We refer to \cite{ELO} for a more detailed description of a generalized BMS-measure, while
only briefly recalling its definition here.

Let $\fa=\op{Lie}(A)$ and fix a positive Weyl chamber $\fa^+<\fa$ so that $\log N$ consists of positive root subspaces. We also fix a maximal compact subgroup $K<G$ so that the Cartan decomposition $G=K(\exp \fa^+) K$ holds. Denote by $\mu:G\to \fa^+$ the Cartan projection, i.e., for $g\in G$, $\mu(g)\in \fa^+$ is the unique element such that $g\in K\exp \mu(g)K$. Denote by $\L_\Ga\subset \fa^+$ the limit cone of $\Gamma$, which is the asymptotic cone of $\mu(\Ga)$, i.e.,
$\L_\Ga=\{\lim t_i \mu(\ga_i)\in \fa^+: t_i\to 0, \ga_i\in \Ga\}$.  The Furstenberg boundary
$\F=G/P$ is isomorphic to $K/M$ as $K$ acts on $\F$ transitively with $K\cap P=M$.

The $\frak a$-valued Busemann function $\beta: \cal F\times G \times G \to\frak a $ is defined as follows: for $\xi\in \cal F$ and $g, h\in G$,
 \be\label{Bu} \beta_\xi ( g, h):=\sigma (g^{-1}, \xi)-\sigma(h^{-1}, \xi)\ee 
where the Iwasawa cocycle $\sigma(g^{-1},\xi)\in \fa$
is defined by the relation $g^{-1}k \in K \exp (\sigma(g^{-1}, \xi)) N$ for $\xi=kP$, $k\in K$.

 The growth indicator function $\psi_{\Gamma}\,:\,\frak a^+ \rightarrow \br \cup\lbrace- \infty\rbrace$  is defined as a homogeneous function, i.e., $\psi_\Gamma (tu)=t\psi_\Gamma (u)$ for all $t>0$, such that
  for any unit vector $u\in \frak a^+$,
 \begin{equation} \label{def.GI}
\psi_{\Gamma}(u):=\inf_{{u\in\cal C},{\mathrm{open\;cones\;}\cal C\subset \fa^+}}\tau_{\cal C}
\end{equation}
where $\tau_{\cal C}$ is the abscissa of convergence of $\sum_{\ga\in\Ga, \mu(\ga)\in\cal C}e^{-t\norm{\mu(\ga)}}$ and
the norm $\|\cdot\|$  on $\frak a$
is the one induced from the Killing form on $\frak g$.

 Denote by $w_0\in K$ a representative of the unique element of the Weyl group $N_K(A)/M$ such that $\op{Ad}_{w_0}\mathfrak a^+= -\mathfrak a^+$.
  The opposition involution  $\i:\mathfrak a \to \mathfrak a$ is defined by
  $$\i (u)= -\op{Ad}_{w_0} (u).$$ Note that $\i$ preserves $\inte \L_\Ga$.

\subsection*{The generalized BMS-measures $m_{\nu_1,\nu_2}$.}

For $g\in G$, we consider the following visual images:
$$g^+=gP\in \F \quad \text{ and}\quad g^-=gw_0P\in \F.$$ Then the map $$gM\mapsto (g^+, g^-, b=\beta_{g^-}(e, g))$$ gives a homeomorphism
 $G/M\simeq  \F^{(2)}\times \fa $, called the Hopf parametrization of $G/M$.

For a pair of linear forms $\psi_1, \psi_2\in \mathfrak a^*$ and a pair of  $(\Gamma,\psi_1)$ and $(\Gamma,\psi_2)$ conformal measures $\nu_1$ and $\nu_2$ respectively, define a locally finite Borel measure $\tilde {m}_{\nu_1,\nu_2}$ on $G/M$ 
  as follows: for $g=(g^+, g^-, b)\in \F^{(2)}\times \mathfrak a$,
\begin{equation}\label{eq.BMS0}
d\tilde m_{\nu_1, \nu_2} (g)=e^{\psi_1 (\beta_{g^+}(e, g))+\psi_2( \beta_{g^-} (e, g )) } \;  d\nu_{1} (g^+) d\nu_{2}(g^-) db,
\end{equation}
  where $db=d\ell (b) $ is the Lebesgue measure on $\mathfrak a$.
  By abuse of notation, we also denote by $\tilde m_{\nu_1, \nu_2}$ the $M$-invariant measure on $G$ induced by $\tilde m_{\nu_1, \nu_2}$.
This is always left $\Ga$-invariant  and we denote by
$ m_{\nu_1, \nu_2}$ the $M$-invariant measure on $\Ga\ba G$ induced by $\tilde m_{\nu_1, \nu_2}$.

\subsection*{The generalized BMS$^*$-measures $m^*_{\nu_1,\nu_2}$.}
Similarly, with a different Hopf parametrization 
$$gM\mapsto (g^+,g^-, b=\beta_{g^+}(e,g))$$ (that is, $g^-$ replaced by $g^+$ in the subscript for $\beta$), we define the following measure 
\begin{equation}\label{eq.BMS*}
d\tilde m_{\nu_1, \nu_2}^* (g)=e^{\psi_1 (\beta_{g^+}(e, g))+\psi_2( \beta_{g^-} (e, g )) } \;  d\nu_{1} (g^+) d\nu_{2}(g^-) db
\end{equation} first on $G/M$ and then the $M$-invariant measure $d m_{\nu_1, \nu_2}^*$ on $\Gamma\ba G$.
One can check 
\be\label{sw} m_{\nu_1,\nu_2}^{*}=m_{\nu_2,\nu_1}.w_0 .\ee 

\begin{lem}\label{dudu}
If $\psi_2=\psi_1\circ \i$, then $m_{\nu_1, \nu_2}=m^*_{\nu_1, \nu_2}$. 
\end{lem}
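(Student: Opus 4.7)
My plan is to reduce both $\tilde m_{\nu_1,\nu_2}$ and $\tilde m^*_{\nu_1,\nu_2}$ to a common form on $G/M$ by parametrizing each right $A$-orbit explicitly and exploiting the hypothesis $\psi_2=\psi_1\circ\i$ to collapse the exponential weight.

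First I would establish the key $A$-equivariance identity. Write $b^{\pm}(g):=\beta_{g^{\pm}}(e,g)$. Since $\exp(X)\in P$ and $\exp(X)\,w_0P=w_0P$, both $g^+$ and $g^-$ are constant along each right $A$-orbit $gA$. A direct Iwasawa computation---using $\exp(-X)\,w_0=w_0\exp(\i(X))$, which in turn uses $\op{Ad}_{w_0^{-1}}=\op{Ad}_{w_0}$ on $\fa$ because $w_0^2\in M$---gives $\beta_P(e,\exp X)=X$ and $\beta_{w_0P}(e,\exp X)=-\i(X)$. Combined with the cocycle identity and $G$-equivariance of $\beta$, this yields
\[
b^+(g\exp X)=b^+(g)+X,\qquad b^-(g\exp X)=b^-(g)-\i(X).
\]
Consequently $b^+(g)+\i(b^-(g))$ is $A$-invariant and descends to a function $C:\F^{(2)}\to\fa$ with $b^+(g)+\i(b^-(g))=C(g^+,g^-)$.

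Next I would apply the hypothesis $\psi_2=\psi_1\circ\i$ to collapse the density. Under this assumption, the exponent common to \eqref{eq.BMS0} and \eqref{eq.BMS*} satisfies
\[
\psi_1(b^+(g))+\psi_2(b^-(g))=\psi_1\bigl(b^+(g)+\i(b^-(g))\bigr)=\psi_1(C(g^+,g^-)),
\]
which depends only on $(g^+,g^-)$. Parametrizing each $A$-fiber over $(g^+,g^-)\in\F^{(2)}$ by $X\in\fa$ via $X\mapsto g_0\exp X$ for a fixed $g_0$ in the orbit, the third coordinates of the two Hopf parametrizations become the affine functions $b^-(g_0)-\i(X)$ and $b^+(g_0)+X$; their linear parts ($-\i$ and $\op{id}$) both have $|\det|=1$ on $\fa$, since $\i$ is a linear involution. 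Hence both Lebesgue measures $db^-$ and $db^+$ pull back to the same Lebesgue $dX$ on each fiber.

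Combining these steps, both $\tilde m_{\nu_1,\nu_2}$ and $\tilde m^*_{\nu_1,\nu_2}$ coincide with the $M$-invariant measure on $G/M$ whose local form is $e^{\psi_1(C(g^+,g^-))}\,d\nu_1(g^+)\,d\nu_2(g^-)\,dX$, and descending to $\Ga\ba G$ gives $m_{\nu_1,\nu_2}=m^*_{\nu_1,\nu_2}$. The main technical point is the Iwasawa computation producing $\beta_{w_0P}(e,\exp X)=-\i(X)$: this is where the opposition involution enters and where sign conventions must be handled carefully. Thereafter the hypothesis $\psi_2=\psi_1\circ\i$ is exactly what is needed to remove the $\fa$-coordinate from the density, making the two measures manifestly equal.
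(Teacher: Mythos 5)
Your proof is correct, and it takes a genuinely different route from the paper's. The paper's own argument is a two-line reduction: it invokes the flip identity \eqref{sw}, $m^*_{\nu_1,\nu_2}=m_{\nu_2,\nu_1}.w_0$, and then checks that under $\psi_2=\psi_1\circ\i$ the right $w_0$-translate $m_{\nu_2,\nu_1}.w_0$ coincides with $m_{\nu_1,\nu_2}$ (the translation by $w_0$ swaps $g^+\leftrightarrow g^-$ and $\beta_{g^+}(e,g)\leftrightarrow\beta_{g^-}(e,g)$, so this check is again a comparison of the two Hopf coordinates). You instead never use $w_0$-translation or \eqref{sw}: you compare the two Hopf parametrizations directly, fiberwise over $\F^{(2)}$, via the transformation rules $\beta_{(g\exp X)^+}(e,g\exp X)=\beta_{g^+}(e,g)+X$ and $\beta_{(g\exp X)^-}(e,g\exp X)=\beta_{g^-}(e,g)-\i(X)$ (your Iwasawa computation, including $\exp(-X)w_0=w_0\exp(\i(X))$ and $\op{Ad}_{w_0^{-1}}=\op{Ad}_{w_0}$ on $\fa$ since $w_0^2\in M$, is right), the constancy of $C=\beta_{g^+}(e,g)+\i(\beta_{g^-}(e,g))$ on $MA$-fibers, and unimodularity of the affine transition between the two $\fa$-coordinates. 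What your approach buys is a self-contained verification that supplies exactly the details the paper compresses into ``we can check''; what the paper's route buys is brevity, since \eqref{sw} has already been recorded. One remark: in your argument the hypothesis $\psi_2=\psi_1\circ\i$ is used only to make the common density $e^{\psi_1(\beta_{g^+}(e,g))+\psi_2(\beta_{g^-}(e,g))}$ constant along each $A$-fiber, but your change-of-variables step does not actually need that constancy --- the density is the same function of the point $gM$ in both definitions \eqref{eq.BMS0} and \eqref{eq.BMS*}, and the fiber transition $b\mapsto C(g^+,g^-)-\i(b)$ preserves Lebesgue measure, so the computation goes through verbatim without the hypothesis (which is of course the case needed later, and using it as you do is perfectly legitimate for the lemma as stated). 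Minor polish: to say $C$ descends to $\F^{(2)}$ you should note that the fibers of $gM\mapsto(g^+,g^-)$ are exactly the right $A$-orbits in $G/M$, i.e.\ $\op{Stab}_G(P,w_0P)=MA$, and that $\beta_{g^\pm}(e,gm)=\beta_{g^\pm}(e,g)$ for $m\in M$.
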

\begin{proof} When $\psi_2=\psi_1\circ\i$, we can check that $m_{\nu_2,\nu_1}.w_0=m_{\nu_1,\nu_2}$, which implies the claim by \eqref{sw}.
\end{proof}

\subsection*{PS-measures on $gN^{\pm}$} \label{psm} Let $N^-=N$ and $N^+=w_0 Nw_0^{-1}$.
To  a given $(\Ga,\psi)$-conformal measure $\nu$ and $g\in G$,
we define the following associated measures on $gN^{\pm}$: 
for $n\in N^+$ and $h\in N^-$, 
\begin{align*}\label{eq.PSN}   
d\mu_{gN^+,\nu}{}(n)&:=e^{\psi(\beta_{(gn)^+} (e,gn) ) }d\nu ((gn)^+),\text{ and }\\
d\mu_{gN^-,\nu}{}(h)&:=e^{\psi (\beta_{(gh)^-}(e, gh)) }d\nu ((gh)^-).
\end{align*}
Note that these are left $\Gamma$-invariant; for any  $\gamma\in \Gamma$ and $g\in G$, 
$\mu_{\gamma gN^\pm,\nu }{}= \mu_{gN^{\pm},\nu}{}.$
For a given Borel subset $X\subset\Ga\ba G$, define the measure $\mu_{gN^+,\nu}{}|_X$ on $N^+$ by
$$
d\mu_{gN^+,\nu}{}|_X
(n)=\mathbbm{1}_X
([g]n)\,d\mu_{gN^+,\nu}{}(n);
$$
note that here the notation $|_X
$ is purely symbolic, as $\mu_{gN^+,\nu}{}|_X
$ is not a measure on $X$.
Set $P^\pm:=MAN^\pm$.
For $\e>0$ and $\star=N, N^+, A, M$,
let $\star_\e$ denote the $\e$-neighborhood of $e$ in $\star$. We then set $P^{\pm}_\e=N^{\pm}_\e A_\e M_\e$.

We recall the following lemmas from \cite{ELO}:
\begin{lem}\label{thickcor}
\cite[Lem. 5.6, Cor. 5.7]{ELO} We have:
\begin{enumerate}
\item
 For any fixed $\rho\in C_c(N^\pm)$ and $g\in G$,
the map $N^\mp\to \br$ given by $n \mapsto \mu_{gn N^{\pm},\nu}{}(\rho)$ is continuous.
\item
Given $\epsilon>0$ and $g\in G$, there exist $R>1$ and a non-negative $\rho_{g,\epsilon}\in C_c(N_R)$ such that $\mu_{gnN,\nu}{}(\rho_{g,\epsilon})>0$ for all $ n\in N_{\epsilon}^+$. 
\end{enumerate}
\end{lem}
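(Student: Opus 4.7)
The plan is to prove (1) by pushing the defining integral for $\mu_{gn_0 N^+,\nu}(\rho)$ onto the Furstenberg boundary $\F$, where the fixed reference measure $\nu$ can be used and all $n_0$-dependence is concentrated in a continuously varying integrand. Concretely, the map $\pi_g: N^+\to \F$, $n\mapsto (gn)^+$, is a homeomorphism onto an open Bruhat cell, so extending $\rho\circ \pi_{gn_0}^{-1}$ by zero to $\F$ one may write
$$\mu_{gn_0 N^+,\nu}(\rho) = \int_\F (\rho\circ \pi_{gn_0}^{-1})(\xi)\, e^{\psi(\beta_\xi(e,\, gn_0\pi_{gn_0}^{-1}(\xi)))}\, d\nu(\xi).$$
As $n_0$ ranges over a compact neighborhood of a base point in $N^-$, the sets $\pi_{gn_0}(\supp\rho)$ stay inside a single compact subset of an open Bruhat cell, and the integrand is uniformly bounded by continuity of the Iwasawa cocycle, hence of the $\fa$-valued Busemann function in \eqref{Bu}. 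Dominated convergence then yields continuity; the case $n_0\in N^+$ acting on $\mu_{gn_0 N^-,\nu}$ is completely symmetric.

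For (2) the plan is to combine (1), the Zariski density of $\La$ in $\F$ (inherited from $\Ga$), and compactness of $\overline{N^+_\epsilon}$. Fix $n \in N^+_\epsilon$. Reasoning as in (1), the image of $N^-$ under $h \mapsto (gnh)^-$ equals $(gnw_0)\cdot(N^+P/P)$, an open subset of $\F$ whose complement is a proper Zariski-closed subvariety, hence meets the Zariski-dense set $\La = \supp\nu$. Pick $h_n \in N^-$ with $(gnh_n)^- \in \La$ and a non-negative bump function $\rho_n \in C_c(N^-)$ supported in a small neighborhood of $h_n$; since $\nu$ assigns positive mass to every open neighborhood of every point of its support and the cocycle weight is strictly positive, $\mu_{gnN^-,\nu}(\rho_n) > 0$. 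By (1), this positivity extends to an open neighborhood $U_n \ni n$ in $\overline{N^+_\epsilon}$. Cover the compact set $\overline{N^+_\epsilon}$ by finitely many $U_{n_1},\dots,U_{n_k}$, set $\rho_{g,\epsilon} := \sum_{i=1}^k \rho_{n_i}$, and choose $R>1$ so that $\supp \rho_{g,\epsilon}\subset N_R$. Then $\mu_{gnN^-,\nu}(\rho_{g,\epsilon}) > 0$ for every $n\in N^+_\epsilon$.

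The main obstacle is setting up the dominated convergence in (1): one must verify that $\pi_{gn_0}(\supp\rho)$ does not escape to the Schubert complement as $n_0$ varies, and that the Iwasawa cocycle stays uniformly bounded on a correspondingly fixed compact subset of $\F$. Both rely on smoothness of the Iwasawa decomposition and openness of the big Bruhat cell, so (1) is really a soft analytic statement. In (2) the crucial ingredient is rather the Zariski density of $\La$ in $\F$, which is precisely what ensures the non-triviality of $\mu_{gnN^-,\nu}$ at each individual $n$.
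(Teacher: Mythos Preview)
The paper does not prove this lemma; it is quoted verbatim from \cite[Lem.~5.6, Cor.~5.7]{ELO}. Your argument is sound and is essentially the natural one: part~(1) is a dominated convergence statement once the integral is rewritten on $\F$ against the fixed measure $\nu$, and part~(2) follows from (1) together with Zariski density of $\supp\nu$ and a finite-cover argument.

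Two small points. First, in (2) you write $\La=\supp\nu$. For a general $(\Gamma,\psi)$-conformal measure this need not hold; what is true (and sufficient for your argument) is $\La\subseteq\supp\nu$, since $\supp\nu$ is closed and $\Gamma$-invariant and $\La$ is the unique $\Gamma$-minimal subset of $\F$. Zariski density of $\La$ (Benoist) then gives Zariski density of $\supp\nu$, which is all you use. Second, in (1) the open cell $gn_0N^+P/P$ genuinely moves with $n_0\in N^-$ (already visible in $\SL_2$), so the phrase ``stay inside a single compact subset of an open Bruhat cell'' requires $n_0$ to range over a \emph{sufficiently small} compact neighborhood of the base point; since continuity is a local statement this is harmless, but it is worth saying explicitly.
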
 
\begin{lem}\label{MANconj}\cite[Lem. 4.2]{ELO} For any $g\in G,\,a\in A,\,n_0,n\in N^+$, we have
\begin{equation*}
d(\theta^{-1}_*\mu_{gN^+,\nu})(n)
%\big( an_0n a^{-1}\big)
=e^{-\psi(\log a)}d\mu_{gan_0N^+,\nu}{}(n),
\end{equation*}
 where $\theta:N^+\to N^+$ is given by $\theta(n)=an_0na^{-1}.$
\end{lem}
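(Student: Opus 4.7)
The plan is a direct change of variables on $N^+$ combined with the cocycle properties of the Busemann function. For any $f\in C_c(N^+)$, one has
\[
\int f\,d(\theta^{-1}_*\mu_{gN^+,\nu}) = \int f(\theta^{-1}(n))\,e^{\psi(\beta_{(gn)^+}(e,gn))}\,d\nu((gn)^+),
\]
and I would substitute $n=\theta(m)=an_0ma^{-1}$, rewrite the right-hand side as an integral over $m\in N^+$, and match it term-by-term with $\int f(m)\,e^{-\psi(\log a)}\,d\mu_{gan_0N^+,\nu}(m)$. Two separate agreements are then needed: the $\nu$-factor and the exponential weight.

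The $\nu$-factor is free: since $a^{-1}\in P$, we have $(g\theta(m))^+=(gan_0ma^{-1})^+=(gan_0m)^+$, so $d\nu((g\theta(m))^+)=d\nu((gan_0m)^+)$ on the nose. For the exponential weight, set $h:=gan_0m$ and $\xi:=hP$. By the additive cocycle property
\[
\beta_\xi(e,ha^{-1})=\beta_\xi(e,h)+\beta_\xi(h,ha^{-1}),
\]
it suffices to check $\beta_\xi(h,ha^{-1})=-\log a$. Using the Iwasawa definition $\beta_\xi(x,y)=\sigma(x^{-1},\xi)-\sigma(y^{-1},\xi)$ together with the cocycle identity $\sigma(a_1a_2,\eta)=\sigma(a_1,a_2\eta)+\sigma(a_2,\eta)$, this reduces to $-\sigma(a,h^{-1}\xi)$; and since $h^{-1}\xi=h^{-1}hP=eP$ is the basepoint, $\sigma(a,eP)=\log a$ directly from the Iwasawa decomposition $a=e\cdot\exp(\log a)\cdot e\in KAN$. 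Applying $\psi$ then pulls out the desired factor $e^{-\psi(\log a)}$.

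The one place to be careful is that $\mu_{gN^+,\nu}$ is not a density against Haar measure on $N^+$, but rather a $(\Gamma,\psi)$-conformal weighting of the pullback of $\nu$ via $n\mapsto gnP$, so the change of variable must be interpreted through this parametrization. Since both sides of the asserted identity are parametrized in exactly the same way (differing only by the shift $g\mapsto gan_0$ and the map $\theta$), no Jacobian-type correction from $N^+$-Haar intervenes, and the identification closes cleanly. I expect this parametrization bookkeeping, rather than any analytic ingredient, to be the only subtle point.
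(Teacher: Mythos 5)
Your argument is correct: the change of variables $n=\theta(m)$, the observation that $(gan_0ma^{-1})^+=(gan_0m)^+$ so the $\nu$-pullback is unaffected, and the cocycle computation $\beta_{h^+}(h,ha^{-1})=-\sigma(a,eP)=-\log a$ give exactly the stated identity. The paper does not reprove this lemma but quotes it from \cite[Lem.\ 4.2]{ELO}, where the proof is the same direct unfolding of the definition of $\mu_{gN^+,\nu}$ combined with the Iwasawa/Busemann cocycle properties, so your approach matches the intended one.
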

\begin{lemma}\label{lem.DC}\cite[Lem. 4.4 and 4.5]{ELO}
For $i=1,2$, let $\psi_i\in\frak a^*$ and $\nu_i$ a $(\Ga,\psi_i)$-conformal measure.
Then
\begin{enumerate}
\item
For $g\in G$, $f\in C_c(gN^+P)$, and $nham\in N^+NAM$,
\begin{multline*}
\tilde{m}_{\nu_1, \nu_2}
(f)=\\ \int_{N^+}\left(\int_{NAM}  f(gnham)e^{(\psi_1-\psi_2\circ\i)(\log a)}\,dm\, da \; d\mu_{gnN,\nu_2}{}(h)\right)d\mu_{gN^+,\nu_1}{}(n).
\end{multline*}
\item
For $g\in G$, $f\in C_c(gPN^+)$, and $hamn\in NAMN^+$,
\begin{multline*}
\tilde{ m}_{\nu_1, \nu_2}^*
(f)=\\ \int_{NAM}\left(\int_{N^+}f(ghamn)\,d\mu_{ghamN^+,\nu_1}{}(n)\right)e^{-\psi_2\circ\i(\log a)}\,dm\,da\,d\mu_{gN,\nu_2}{}(h).
\end{multline*}

\end{enumerate}
\end{lemma}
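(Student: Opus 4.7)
The plan is to prove both identities by direct change of variables from the Hopf parametrization of $G/M$ to the product decompositions on $N^+NAM$ (resp.\ $NAMN^+$) in which the integrands are naturally formulated, then use the defining relation between the PS measures $\mu_{gN^\pm,\nu}$ and the original conformal measures $\nu$.

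For part (1), I consider the diffeomorphism
$$N^+\times N\times A\times M\longrightarrow gN^+P,\qquad (n,h,a,m)\mapsto g':=gnham.$$
Since $ham\in P$, one has $(g')^+=(gn)^+$; since $w_0^{-1}AMw_0=AM$ gives $amw_0P=w_0P$, one has $(g')^-=(gnh)^-$. Applying the cocycle property $\beta_\xi(x,z)=\beta_\xi(x,y)+\beta_\xi(y,z)$ and the $G$-equivariance $\beta_{g\xi}(gx,gy)=\beta_\xi(x,y)$, one computes
\begin{align*}
\beta_{(g')^+}(e,g')&=\beta_{(gn)^+}(e,gn)+\beta_P(e,ham)=\beta_{(gn)^+}(e,gn)+\log a,\\
\beta_{(g')^-}(e,g')&=\beta_{(gnh)^-}(e,gnh)+\beta_{w_0P}(e,am)=\beta_{(gnh)^-}(e,gnh)-\i(\log a).
\end{align*}
Here $\beta_P(e,ham)=\log a$ is immediate from the Iwasawa decomposition $m^{-1}a^{-1}h^{-1}\in KAN$, and $\beta_{w_0P}(e,am)=-\i(\log a)$ follows from $m^{-1}a^{-1}w_0=(m^{-1}w_0)\exp(\i(\log a))\in KA$, using $w_0^{-1}\exp(X)w_0=\exp(-\i(X))$ on $\fa$.

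Substituting into \eqref{eq.BMS0} and rewriting
$$d\nu_1((gn)^+)=e^{-\psi_1(\beta_{(gn)^+}(e,gn))}\,d\mu_{gN^+,\nu_1}(n),\quad d\nu_2((gnh)^-)=e^{-\psi_2(\beta_{(gnh)^-}(e,gnh))}\,d\mu_{gnN,\nu_2}(h),$$
the PS-related exponents cancel with the corresponding terms in the BMS exponent, leaving exactly the factor $e^{(\psi_1-\psi_2\circ\i)(\log a)}$. The coordinate $db$ on the $\fa$-fiber of the Hopf parametrization transforms to $d(\log a)=da$ with unit Jacobian, since $\i$ is an orthogonal involution of $\fa$, and the $dm$-factor arises from lifting the $M$-invariant measure on $G/M$ to $G$; this yields formula (1). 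Part (2) is handled by the analogous computation with $g'=ghamn\in gNAMN^+$ and the BMS$^*$-parametrization $b=\beta_{g^+}(e,g)$. One gets $(g')^-=(gh)^-$ immediately, and writing $amn=n^*am$ with $n^*:=amna^{-1}m^{-1}\in N^+$ gives $(g')^+=(ghn^*)^+$, which is precisely why the inner PS measure in (2) is anchored at $ghamN^+$; the $\psi_1$-contribution is then absorbed into $d\mu_{ghamN^+,\nu_1}(n)$ via the moving base point, and only the factor $e^{-\psi_2\circ\i(\log a)}$ survives. Alternatively, one could deduce (2) from (1) via the symmetry $m^*_{\nu_1,\nu_2}=m_{\nu_2,\nu_1}\cdot w_0$ from \eqref{sw}, though tracking the $w_0$-action on the coordinates is not obviously shorter.

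\textbf{Main obstacle.} The principal technical hurdle is the careful bookkeeping of the Iwasawa cocycle to isolate the modular factors $e^{(\psi_1-\psi_2\circ\i)(\log a)}$ in (1) and $e^{-\psi_2\circ\i(\log a)}$ in (2). The opposition involution enters through $\Ad_{w_0^{-1}}(X)=-\i(X)$ on $\fa$, so one must track the interaction of $w_0$ with the Iwasawa decomposition carefully; everything else is a straightforward application of the cocycle and equivariance properties of $\beta$ together with the defining relation between the PS measures and the conformal measures.
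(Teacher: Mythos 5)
Your proposal is correct, and it proceeds exactly as one should: the paper itself does not reprove this lemma (it is quoted from \cite[Lem.~4.4 and 4.5]{ELO}), and your direct change of variables in the Hopf parametrization, with the cocycle identities $\beta_P(e,ham)=\log a$ and $\beta_{w_0P}(e,am)=-\i(\log a)$, the unit Jacobian $db=da$ coming from $|\det\i|=1$, and the absorption of the $e^{\psi_1(\log a)}$ factor into the $gham$-based PS measure in part (2), is the standard argument of the cited source. No gaps to flag.
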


\medskip 
\subsection*{Local mixing}
Let $P^\circ$ denote the identity component of $P$ and $\frak Y_\Ga$ denote the set of all $P^\circ$-minimal subsets of $\Ga\ba G$.
While there exists a unique $P$-minimal subset of $\Ga\ba G$
given by $\{[g]\in \Ga\ba G: g^+\in \La\}$,
there may be more than one $\pc$-minimal subset. Note that
$\#\frak Y_\Ga \le [P: \pc]=[M:M^\circ]$. 
Set $\Omega=\{ [g]\in \Ga\ba G: g^{\pm}\in \La\}$ and write 
$$\frak Z_\Ga=\{Y\cap \Omega\subset \Ga\ba G: Y\in \frak Y_\Ga\}.$$
Note that for each $Y\in \frak Y_\Ga$, we have
$Y=(Y\cap \Omega)N$ and the collection $\{(Y\cap \Omega)N^+:Y\in \frak Y_\Ga\}$ is in one-to-one correspondence with the set
of $(M^\circ AN^+)$-minimal subsets of $\Ga\ba G$.

In the rest of the section, we fix a unit vector $u\in \L_\Ga\cap \inte \fa^+$, and set
$$a_t=\exp (tu)\quad\text{ for $t\in \br$.}$$
We also fix
$$\psi_1\in \fa^* \quad\text{ and}\quad \psi_2:=\psi_1\circ \i\in \fa^*.$$ 
 For each $i=1,2$,
we  fix  a $(\Ga, \psi_i)$-$\PS$ measure $\nu_i$ on $\F$. 
We will assume that the associated BMS-measure
$\mathsf m={m}_{\nu_1, \nu_2}$ satisfies the local mixing property for the $\{a_t: t\in \br \}$-action in the following sense:

\noindent{\bf Hypothesis on $\mathsf m=m_{\nu_1, \nu_2}$:}
there exists a proper continuous function $\Psi : \bb (0,\infty)\to\bb (0,\infty)$
 such that for all $f_1,\,f_2\in C_c(\Ga\ba G)$,
\begin{equation}\label{eq.hypo}%\label{lmp}
\lim_{t\rightarrow+\infty} \Psi(t)\int_{\GaG} f_1 (x a_t  ) f_2(x)  \,d\mathsf m (x)=\sum_{Z\in\frak Z_\Ga}
   \mathsf m|_Z(f_1)\,\mathsf m|_Z(f_2).
\end{equation}

The main goal in this section is to obtain the following local mixing property for a generalized BMS-measure $m_{\lambda_1,\lambda_2}$ from that of $\mathsf m$
(note that $\lambda_1$ and $\lambda_2$ are not assumed to be supported on $\La$):
\begin{Thm}\label{prop.mixH000} For $i=1,2$, let $\varphi_i\in \fa^*$ and $\lambda_i$ be 
 a $(\Gamma, \varphi_i)$-conformal measure on $\F$.
Then for all $f_1, f_2\in C_c(\Ga\ba G)$, we have
\begin{multline*}
\lim_{t\rightarrow+\infty} \Psi(t)e^{(\varphi_1-\psi_1)(t  u )} \int_{\GaG} f_1 (x a_t)f_2(x)\,dm^*_{\lambda_1,\lambda_2}{}(x) \\
= \sum_{Z\in\frak Z_\Ga}
m_{\lambda_1,\nu_2}|_{ZN^+}(f_1)\, m_{\nu_1,\lambda_2}^{*}|_{ZN}(f_2).
\end{multline*}
\end{Thm}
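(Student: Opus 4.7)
The plan is to adapt the argument of \cite[Prop.~6.8]{ELO}, which handled the Haar measure in place of $m^*_{\lambda_1,\lambda_2}$; the new wrinkle is the bookkeeping of the conformal-density mismatches between $\lambda_i$ and $\nu_i$. By a partition of unity I would reduce to $f_1, f_2$ each supported on a small product chart of the form $[g_i]P^{-}_\e N^{+}_\e$. Using the disintegration in Lemma~\ref{lem.DC}(2),
$$\int f_1(xa_t)f_2(x)\,dm^*_{\lambda_1,\lambda_2}(x)$$
becomes an iterated integral over $(h,a,m,n)\in N\times A\times M\times N^+$ against $d\mu_{ghamN^+,\lambda_1}(n)\,e^{-\varphi_2\circ\i(\log a)}\,dm\,da\,d\mu_{gN,\lambda_2}(h)$.

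In the innermost $N^+$-integral I would substitute $n=a_t\tilde n a_t^{-1}$. Lemma~\ref{MANconj} transforms the measure as $d\mu_{ghamN^+,\lambda_1}(n)=e^{-\varphi_1(tu)}\,d\mu_{ghama_tN^+,\lambda_1}(\tilde n)$, while $ghamn\,a_t$ simplifies to $gham\,a_t\tilde n$. Since $a_tN^+a_t^{-1}\to\{e\}$ and $f_1$ has compact support, Lemma~\ref{thickcor}(1) confines $\tilde n$ to a bounded range uniformly in $t$ and forces $f_2(gham\cdot a_t\tilde n a_t^{-1})\to f_2(gham)$ uniformly there; dominated convergence yields, up to vanishing error,
$$e^{-\varphi_1(tu)}\int_{NAM}f_2(gham)\,\Phi(gham\,a_t)\,e^{-\varphi_2\circ\i(\log a)}\,dm\,da\,d\mu_{gN,\lambda_2}(h),$$
where $\Phi(x):=\int_{N^+}f_1(xn)\,d\mu_{xN^+,\lambda_1}(n)$ is the $\lambda_1$-leafwise $N^+$-integral of $f_1$.

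The main obstacle is then to identify this expression, multiplied by $\Psi(t)e^{(\varphi_1-\psi_1)(tu)}$, with $\int F_1(xa_t)F_2(x)\,d\mathsf m(x)$ for a suitable pair $(F_1,F_2)$, so that the local mixing hypothesis~\eqref{eq.hypo} applies. Since $\lambda_i$ and $\nu_i$ may be mutually singular, the matching cannot be done via Radon--Nikodym on $\F$ and must be carried out on the $NAM$-transversal: one runs the same change-of-variables argument for $\mathsf m$ (using its $m^*_{\nu_1,\nu_2}$ description, available by Lemma~\ref{dudu}) applied to $F_1=\Phi$ and $F_2$ equal to the analogous $N$-leafwise integral of $f_2$ against $\mu_{\cdot,\lambda_2}$. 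The same change of variables for $\mathsf m$ produces the factor $e^{-\psi_1(tu)}$ in place of $e^{-\varphi_1(tu)}$, so the ratio $e^{(\varphi_1-\psi_1)(tu)}$ is exactly the correction needed to match the two normalizations. Once the identification is complete, invoking \eqref{eq.hypo} and unpacking the resulting $\sum_Z \mathsf m|_Z(F_1)\,\mathsf m|_Z(F_2)$ via Lemma~\ref{dudu} and \eqref{sw}, together with the $N^\pm$-invariance of $\Phi$ and its $N$-counterpart, spreads each $Z$-cross-section integral into $m_{\lambda_1,\nu_2}|_{ZN^+}(f_1)\cdot m^*_{\nu_1,\lambda_2}|_{ZN}(f_2)$, as claimed.
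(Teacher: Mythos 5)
Your outer scaffolding (partition of unity, disintegration of $m^*_{\lambda_1,\lambda_2}$ via Lemma \ref{lem.DC}(2), the substitution $n=a_t\tilde n a_t^{-1}$ with Lemma \ref{MANconj} producing the factor $e^{-\varphi_1(tu)}$, and the final reassembly over $Z\in\frak Z_\Ga$) matches the paper's strategy. The gap is at the step you yourself flag as "the main obstacle": you propose to identify the resulting expression, after multiplying by $e^{(\varphi_1-\psi_1)(tu)}$, with a correlation $\int F_1(xa_t)F_2(x)\,d\mathsf m(x)$ where $F_1,F_2$ are the $\lambda_i$-leafwise integrals of $f_1,f_2$, and then invoke \eqref{eq.hypo}. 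This fails for two reasons. First, \eqref{eq.hypo} is a hypothesis for $f_1,f_2\in C_c(\Ga\ba G)$, and the leafwise integrals $\Phi([g])=\int_{N^+}f_1([g]n)\,d\mu_{gN^+,\lambda_1}(n)$ are not compactly supported (their support contains pieces of $\supp f_1\cdot N^+$), so the hypothesis cannot be applied to them directly. Second, and more fundamentally, no such identification holds even approximately as an identity in $t$: the disintegration of $\mathsf m=m^*_{\nu_1,\nu_2}$ over the same chart uses the leafwise measures $\mu_{\cdot,\nu_1}$ and the transversal measure $\mu_{gN,\nu_2}$, which may be mutually singular to $\mu_{\cdot,\lambda_1}$ and $\mu_{gN,\lambda_2}$; the exponential factor $e^{(\varphi_1-\psi_1)(tu)}$ corrects a normalization but cannot convert integrals against one family of measures into integrals against the other. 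Relatedly, Lemma \ref{thickcor}(1) is only a continuity statement for $n\mapsto\mu_{gnN^{\pm},\nu}(\rho)$; it does not confine $\tilde n$ to a bounded range, and in fact the set $\{\tilde n: [gham\,a_t\tilde n]\in\supp f_1\}$ is a union of boundedly many-sized pieces indexed by the group elements $\gamma\in\Gamma_t$ realizing the recurrence, which your outline never unfolds.

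What is actually needed (and what the paper does) is a two-step equidistribution argument replacing your "matching" step. Proposition \ref{p1} first upgrades \eqref{eq.hypo} to equidistribution of $a_t$-translates of the $\nu_1$-leafwise measures $\mu_{gN^+,\nu_1}$; this requires a genuine thickening of the transversal (the functions $\rho_{g,\epsilon_0}$ on $N$ and $q_\epsilon$ on $AM$, with $f^{\pm}_{3\e}$ bounds) precisely to produce compactly supported test functions to which \eqref{eq.hypo} applies. Proposition \ref{p2} then transfers this to the translates of $\mu_{gN^+,\lambda_1}$: one unfolds over $\gamma\in\Gamma_t$, writes $\gamma g a_t=g_0p_{t,\gamma}n_{t,\gamma}$, and constructs a compactly supported $\tilde F$ (plus an error $\tilde h$ vanishing on $\bigcup_Z ZN^+$) whose $\nu_1$-leafwise integrals over each leaf $g_0pN^+$ reproduce the $\lambda_1$-data $\int_{N^+_{\epsilon_0}}\tilde f(g_0pv)\,d\mu_{g_0pN^+,\lambda_1}(v)$ on each $Z$, using $\supp\nu_1=\La$ and $\Omega\cap ZN^+=Z$; only then does Proposition \ref{p1} apply, and Lemma \ref{lem.DC} identifies the limit as $m_{\lambda_1,\nu_2}|_{ZN^+}(f)$. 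Your instinct that the comparison "must be carried out on the $NAM$-transversal" points in the right direction, but as written the proposal replaces the core asymptotic transfer between mutually singular conformal families with a normalization bookkeeping, so the argument as it stands does not go through.
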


\begin{Rmk} \rm If $\varphi_2=\varphi_1\circ\i$,
we may replace $m_{\la_1,\la_2}^*$ by
$m_{\la_1,\la_2}$ in Theorem \ref{prop.mixH000}
by Lemma \ref{dudu}.
For general $\varphi_1,\varphi_2$, we get,
using the identity \eqref{sw}: for all $f_1, f_2\in C_c(\Ga\ba G)$, we have
\begin{multline*}
\lim_{t\rightarrow+\infty} \Psi(t)e^{(\varphi_1-\psi_1)(t  u )} \int_{\GaG} f_1 (x a_{-t})f_2(x)\,dm_{\lambda_2,\lambda_1}{}(x) \\
= \sum_{Z\in\frak Z_\Ga}
m_{\nu_2,\lambda_1}^*|_{ZN^+}(f_1)\, m_{\lambda_2, \nu_1}|_{ZN}(f_2).
\end{multline*}
\end{Rmk}

In order to prove Theorem \ref{prop.mixH000}, we first deduce equidistribution of translates of $\mu_{gN^+,\nu_1}{}$ from the local mixing property of ${\mathsf m}$ (Proposition \ref{p1}), and then convert this into equidistribution of translates of $\mu_{gN^+,\lambda_1}{}$ (Proposition \ref{p2}).
\begin{prop}\label{p1}
 For any $x=[g] \in \GaG$, $f\in C_c(\GaG)$, and $\phi\in C_c(N^+)$,
\begin{equation} \label{PSequi} 
\lim_{t\rightarrow +\infty} \Psi(t)\int_{N^+} f\ (x na_{t} )\phi(n)\,d\mu_{gN^+, \nu_1}{}(n)= \sum_{Z\in\frak Z_\Ga} {\mathsf m}|_Z(f)\,\mu_{gN^+,\nu_1}{}|_{ZN}(\phi).
\end{equation}
\end{prop}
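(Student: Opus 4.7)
The plan is the classical ``thickening'' argument going back to Roblin: approximate the $N^+$-integral by a full BMS-integral on $\Ga\ba G$, apply the local mixing hypothesis \eqref{eq.hypo} for $\mathsf m=m_{\nu_1,\nu_2}$, and then identify the resulting limit. For small $\epsilon>0$, I build a test function $F_\epsilon\in C_c(\Ga\ba G)$ whose lift near $[g]$, in the local $N^+NAM$-chart, is
$$
\widetilde F_\epsilon(gnham)=\frac{\phi(n)\,\rho_N(h)\,\rho_0(am)}{\mu_{gnN,\nu_2}(\rho_N)},
$$
where $\rho_N\in C_c(N)$ is chosen via Lemma \ref{thickcor}(2) so that $\mu_{gnN,\nu_2}(\rho_N)>0$ for all $n\in\supp\phi$---continuity from Lemma \ref{thickcor}(1) together with compactness of $\supp\phi$ providing a uniform positive lower bound---and $\rho_0\in C_c(A_\epsilon M_\epsilon)$ is nonnegative with $\int_{AM}\rho_0(am)\,dm\,da=1$.

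Applying Lemma \ref{lem.DC}(1) to $\widetilde F_\epsilon\cdot f(\cdot a_t)$, the $\mu_{gnN,\nu_2}(\rho_N)$-factor cancels and one gets
$$
\mathsf m(F_\epsilon\cdot f(\cdot a_t))=\int_{N^+}\frac{\phi(n)}{\mu_{gnN,\nu_2}(\rho_N)}\int_{NAM}\rho_N(h)\rho_0(am)e^{(\psi_1-\psi_2\circ\i)(\log a)}f(gnhama_t)\,dm\,da\,d\mu_{gnN,\nu_2}(h)\,d\mu_{gN^+,\nu_1}(n).
$$
The key use of $u\in\inte\fa^+$ enters through the factorization $gnhama_t=gna_t(a_t^{-1}ha_t)am$: since $\Ad(a_{-t})$ contracts $N$ exponentially, $a_t^{-1}ha_t\to e$ uniformly for $h\in\supp\rho_N$, while $am\in A_\epsilon M_\epsilon$. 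Combined with uniform continuity of $f$, we obtain $f(gnhama_t)\to f(gna_t)$ uniformly over the support of $\rho$, and the exponential weight is $1+O(\epsilon)$ on $A_\epsilon$. This gives
$$
\mathsf m(F_\epsilon\cdot f(\cdot a_t))=\int_{N^+}\phi(n)f(gna_t)\,d\mu_{gN^+,\nu_1}(n)+O(\epsilon),
$$
uniformly for large $t$. Invoking \eqref{eq.hypo} for the pair $(F_\epsilon,f)$ converts the left side into $\Psi(t)^{-1}\sum_Z\mathsf m|_Z(F_\epsilon)\mathsf m|_Z(f)+o(\Psi(t)^{-1})$.

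The remaining---and main---obstacle is to verify $\lim_{\epsilon\to 0}\mathsf m|_Z(F_\epsilon)=\mu_{gN^+,\nu_1}|_{ZN}(\phi)$ for each $Z\in\frak Z_\Ga$. Writing $Z=Y\cap\Omega$ for a $P^\circ$-minimal $Y$, one checks from the definition that $\Omega$ is $MA$-invariant on the right and $Y$ is $M^\circ AN$-invariant; hence $Z$ is $M^\circ A$-invariant, so for $\epsilon$ small enough that $am\in M^\circ A_\epsilon$, we have $\mathbb{1}_Z([gnham])=\mathbb{1}_Z([gnh])$. Applying Lemma \ref{lem.DC}(1) to $F_\epsilon\cdot\mathbb{1}_Z$ and then using $N$-invariance of $Y$, the condition $[gnh]\in Z$ decouples into $[gn]\in Y$ (independent of $h$) together with $(gnh)^-\in\La$, and the latter is automatic on the support of $\mu_{gnN,\nu_2}$. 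Consequently the $h$-integral of $\rho_N$ against $\mu_{gnN,\nu_2}|_{\{h:[gnh]\in Z\}}$ reduces to $\mathbb{1}_Y([gn])\,\mu_{gnN,\nu_2}(\rho_N)$, cancelling the denominator, and leaves $\int_{N^+}\phi(n)\mathbb{1}_Y([gn])\,d\mu_{gN^+,\nu_1}(n)+O(\epsilon)$. Finally, $\mathbb{1}_Y([gn])=\mathbb{1}_{ZN}([gn])$ holds $\mu_{gN^+,\nu_1}$-almost surely, since the PS-measure forces $(gn)^+\in\La$ and then the $N$-leaf through $[gn]$ supports $\mu_{gnN,\nu_2}$. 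Letting $\epsilon\to 0$ after passing to the limit in $t$ yields the claim. The delicate point is the compatibility of the $M^\circ A$-invariance of $Z$, the $N$-invariance of $Y$, and the PS-support on the $N$-leaves under disintegration; this is where the $P^\circ$-minimal structure genuinely intervenes.
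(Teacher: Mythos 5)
Your construction of the thickened test function $F_\epsilon$ and your identification of $\mathsf m|_Z(F_\epsilon)$ with $\mu_{gN^+,\nu_1}|_{ZN}(\phi)$ (using $M^\circ A$-invariance of $Z$, $N$-invariance of $Y$, $\supp\nu_2=\La$, and $Y=(Y\cap\Omega)N$) follow the same thickening scheme as the paper and are essentially fine. The gap is in the middle step, where you replace $f(gnham\,a_t)$ by $f(gn\,a_t)$ via uniform continuity and write
$\mathsf m\bigl(F_\epsilon\cdot f(\cdot\,a_t)\bigr)=\int_{N^+}\phi(n)f(gna_t)\,d\mu_{gN^+,\nu_1}(n)+O(\epsilon)$
``uniformly for large $t$.'' This additive error is of constant size in $t$ (it is controlled only by the modulus of continuity of $f$ times the total mass of the kernel), whereas the two quantities being compared both decay at rate $\Psi(t)^{-1}$. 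After multiplying by $\Psi(t)$, the error term becomes $\Psi(t)\,O(\epsilon)$, which diverges for each fixed $\epsilon$, so you cannot take $t\to+\infty$ first and $\epsilon\to 0$ afterwards. To make your error useful you would need it to be $O(\epsilon)\Psi(t)^{-1}$, i.e.\ an a priori upper bound of the correct order for the $\mu_{gN^+,\nu_1}$-measure of the set of $n$ whose orbit point $xna_t$ lands near $\supp f$ --- but that bound is essentially (the upper-bound half of) the statement you are proving, so the argument as written is circular at this point.

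The paper's proof avoids any additive error: instead of moving the evaluation point back to $gna_t$, it moves $f$ forward, sandwiching $f(xna_t)\,\tilde\Phi_\epsilon(gnham)$ between $f^{-}_{3\epsilon}(xnham\,a_t)\,\tilde\Phi_\epsilon(gnham)$ and $f^{+}_{3\epsilon}(xnham\,a_t)\,\tilde\Phi_\epsilon(gnham)$, where $f^{\pm}_{\epsilon}(z)=\sup/\inf_{b\in N^+_\epsilon P_\epsilon}f(zb)$; these are exact pointwise inequalities valid for all $t\ge t_0(R,\epsilon)$ because $a_t^{-1}N_Ra_t\subset N_\epsilon$. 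This turns the $N^+$-integral into genuine correlation integrals of the fixed pairs $(f^{\pm}_{3\epsilon},\Phi_\epsilon)$, to which the hypothesis \eqref{eq.hypo} applies directly (up to the multiplicative factor $e^{\epsilon\norm{\psi_1-\psi_2\circ\i}}$ from the $A_\epsilon$-weight), and only after the $t$-limit does one send $\epsilon\to 0$, using $\mathsf m|_Z(f^{\pm}_{3\epsilon})\to\mathsf m|_Z(f)$. You need to restructure your approximation step along these lines (or otherwise supply a uniform upper bound of order $\Psi(t)^{-1}$ for the shadow measure) before the interchange of limits is justified. A further small point: your decoupling of $[gnh]\in Z$ should also record the condition $(gn)^+\in\La$ coming from $[gnh]\in\Omega$; it is harmless here because $[gn]\in Y=(Y\cap\Omega)N$ already forces $(gn)^+\in\La$, but it should be said.
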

\begin{proof}
Let $x=[g]$, and $\epsilon_0>0$ be such that $\phi\in C_c(N_{\epsilon_0}^+)$. For simplicity of notation,
we write $d\mu_{\nu_1}=d\mu_{gN^+, \nu_1}$ throughout the proof.
By Lemma \ref{thickcor}, we can choose $R>0$ and a nonnegative $\rho_{g,\epsilon_0}\in C_c(N_R)$ such that
\begin{equation*}
\mu_{gnN,\nu_2}{}(\rho_{g,\epsilon_0})>0\quad \text{ for all  }n\in N_{\epsilon_0}^+.
\end{equation*} 
Given any $\epsilon>0$, choose a non-negative function  $q_{\epsilon}\in C_c(A_{\epsilon}M_{\e})$ satisfying $\int_{AM}q_{\epsilon}(am)\,da\,dm=1$.
Then 
\begin{align}\label{fthickening}
&\int_{N^+} f(x n a_{t})\phi(n)\,d\mu_{\nu_1}{}(n)=
\\\notag&\int_{N^+} f(xn a_{t} )\phi(n)\left( \tfrac{1}{\mu_{gnN,\nu_2}{}(\rho_{g,\epsilon_0})}\int_{NA} \rho_{g,\epsilon_0}(h)q_{\epsilon}(am)\,da\,dm\,d\mu_{gnN,\nu_2}{}(h)\!\right)d\mu_{\nu_1}{}(n)
\\\notag&=\int_{N^+} \left(\int_{NA}f(xna_{t} ) \tfrac{\phi(n)\rho_{g,\epsilon_0}(h)q_{\epsilon}(am)}{\mu_{gnN,\nu_2}{}(\rho_{g,\epsilon_0})} \,da\,dm\,d\mu_{gnN,\nu_2}{}(h)\!\right)d\mu_{\nu_1}{}(n).
\end{align}
We now define $\tilde \Phi_{\epsilon}\in C_c(gN_{\epsilon_0}^+N_R A_{\epsilon}M_{\e})\subset C_c(G)$ and $\Phi_{\e}\in C_c(\Ga\ba G)$ by
\begin{equation*}
\tilde \Phi_{\epsilon}(g_0):=\begin{cases}\frac{\phi(n)\rho_{g,\epsilon_0}(h)q_{\epsilon}(am)}{\mu_{gnN,\nu_2}{}(\rho_{g,\epsilon_0})}\qquad&\mathrm{if\;}g_0=gnham,\\0\qquad&\mathrm{otherwise,}
\end{cases}
\end{equation*}
and ${\Phi}_{\epsilon}([g_0]):=\sum_{\gamma\in \Gamma} \tilde\Phi_{\epsilon}(\gamma g_0)$.
Note that the continuity of $\tilde  \Phi_{\epsilon}$ follows from Lemma \ref{thickcor}.
%Also observe that $\tilde  \Phi_{\epsilon}$ depends on our choice of representative for $x=[g]$.
We now assume without loss of generality that $f\geq 0$ and define, for all $\epsilon>0$, functions $f_{\epsilon}^{\pm}$ as follows:  for all $ z\in \GaG$,
$$
f_{\epsilon}^+(z):= \sup_{b\in N_{\epsilon}^+P_{\e}} f(zb)\;\;\text{and} \;\;
f_{\epsilon}^-(z):= \inf_{b\in N_{\epsilon}^+P_{\e}} f(zb).$$

Since $u\in\inte\fa^+$, for every $\epsilon>0$, there exists $t_0(R,\epsilon)> 0$ such that
\begin{equation*}
a_t^{-1}  
N_R
a_t
 \subset N_{\epsilon}\qquad \text{ for all }t\geq 
 { t_0(R,\epsilon)}. 
\end{equation*}
Then, as $\mathrm{supp}(\tilde  \Phi_{\epsilon})\subset gN_{\epsilon_0}^+ N_{R} A_{\epsilon}M_{\e}$, we have
\begin{align}\label{fplus}
f(xn{a_{t}} )\tilde  \Phi_\epsilon(gnham) \leq f_{3\epsilon}^+ (x nh a ma_{t} )\tilde  \Phi_\epsilon(gnham)
\end{align}
for all $nh am\in N^+NAM$
and $t\geq t_0(R,\epsilon)$.
We now use $f_{3\epsilon}^+$ to give an upper bound on the limit we are interested in; $f_{3\epsilon}^-$ is used in an analogous way to provide a lower bound. Entering the definition of $\Phi_{\epsilon}$ and the above inequality \eqref{fplus} into \eqref{fthickening} gives
\begin{multline*}
\limsup_{t\rightarrow+\infty}\, \Psi(t)\int_{N^+} f(xn a_{t} )\phi(n)\,d\mu_{\nu_1}{}(n)
\\ \leq \limsup_{t\rightarrow+\infty} \,\Psi(t) \\ \int_{N^+} \int_{NAM}f_{3\epsilon}^+(x nham a_{t} ) \tilde \Phi_{\epsilon}(g nham)dm\,da\,d\mu_{gnN,\nu_2}{}(h)\,d\mu_{\nu_1}{}(n)
\\
\leq \limsup_{t\rightarrow+\infty} \,\Psi(t)e^{\e\norm{\psi_1-\psi_2\circ\i}}
\int_{N^+} \int_{NAM}f_{3\epsilon}^+(x nham a_{t} ) \tilde \Phi_{\epsilon}(g nham)\\ e^{(\psi_1-\psi_2\circ\i)(\log a)}\,dm\,da\,d\mu_{gnN,\nu_2}{}(h) \,d\mu_{\nu_1}{}(n)
\\=\limsup_{t\rightarrow+\infty}\, \Psi(t)e^{\e\norm{\psi_1-\psi_2\circ\i}}\int_{G}f_{3\epsilon}^+([g_0] a_t ) \tilde  \Phi_{\epsilon}(g_0)\,d\tilde{\mathsf m}(g_0)
\\=\limsup_{t\rightarrow+\infty}\, \Psi(t)e^{\e\norm{\psi_1-\psi_2\circ\i}}\int_{\GaG} f_{3\epsilon}^+([g_0] a_t ) {\Phi}_{\epsilon}([g_0])\,d{\mathsf m}([g_0]),
\end{multline*}
where
$\norm{\cdot}$ is the operator norm on $\frak a^*$ and
Lemma \ref{lem.DC} was used in the second to last line of the above calculation.
By the standing assumption \eqref{eq.hypo}, we have
\begin{align*}
\limsup_{t\rightarrow+\infty}\, &\Psi(t)\int_{N} f(x n a_{t} )\phi(n)\,d\mu_{gN,\nu_2}{}(n) 
\\&\leq   e^{\e\norm{\psi_1-\psi_2\circ\i}}\sum_{Z\in\frak Z_\Ga} \,{\mathsf m}|_Z(f_{3\epsilon}^+){\mathsf m}|_Z({\Phi}_{\epsilon})\\
&= e^{\e\norm{\psi_1-\psi_2\circ\i}}\sum_{Z\in\frak Z_\Ga} \,{\mathsf m}|_Z(f_{3\epsilon}^+)\tilde{\mathsf m}|_{\tilde Z}( \tilde \Phi_{\epsilon}),
\end{align*}
where $\tilde Z\subset G$ is a $\Ga$-invariant lift of $Z$.
Using Lemma \ref{lem.DC}, for all $0<\e\ll1$,
\begin{align*}
&\tilde{\mathsf m}|_{\tilde { Z}}( \tilde  \Phi_{\epsilon})\\
&=\int_{N^+} \left(\int_{NAM}\tilde  \Phi_{\epsilon}\mathbbm{1}_{\tilde Z}(g nham)e^{(\psi_1-\psi_2\circ\i)(\log a)}\,da\,dm\,d\mu_{gnN,\nu_2}{}(h)\right)\,d\mu_{\nu_1}{}(n)\le 
\\&e^{\e\norm{\psi_1-\psi_2\circ\i}}\int_{N^+} \tfrac{\phi(n)\mathbbm{1}_{
\tilde Z N}(gn)}{\mu_{gnN,\nu_2}{}(\rho_{g,\epsilon_0})}\left(\int_{NAM} \rho_{g,\epsilon_0}(h)q_{\epsilon}(am)\,da\,dm\,d\mu_{gnN,\nu_2}{}(h)\!\right)\,d\mu_{\nu_1}{}(n)
\\&\leq e^{\e\norm{\psi_1-\psi_2\circ\i}} \mu_{\nu_1}{}|_{ZN}(\phi),
\end{align*}
where we have used the facts that $\tilde Z$ is invariant under the right translation of identity component $M^\circ$ of $M$,
and $\supp\nu_2=\La$ as well as the identity $\mathbbm{1}_{\tilde Z}(gnha)=\mathbbm{1}_{\tilde ZN}(gn)\mathbbm{1}_{\La}(gnh^+)$ 
(we remark that $\supp\nu_2=\La$ is not necessary for the upper bound as $\mathbbm{1}_{\tilde Z}(gnha)\leq \mathbbm{1}_{\tilde ZN}(gn)$, but  needed for the lower bound).
Since $\epsilon>0$ was arbitrary, taking $\epsilon\rightarrow0$ gives
\begin{equation*}
\limsup_{t\rightarrow+\infty}\, \Psi(t)\int_{N^+} f(x n a_{t} )\phi(n)\,d\mu_{\nu_1}{}(n) \leq 
\sum_{Z\in\frak Z_\Ga} {\mathsf m}|_Z(f)\,\mu_{\nu_1}{}|_{ZN}(\phi).
\end{equation*}
The lower bound given by replacing $f^+_{3\epsilon}$ with $f^-_{3\epsilon}$ in the above calculations completes the proof.
\end{proof}
\begin{prop}\label{p2} 
For any $x=[g] \in \GaG$, $f\in C_c(\Ga\ba G)$ and $\phi\in C_c(N^+)$,
\begin{multline*}
\lim_{t\rightarrow+\infty} \Psi(t)
e^{(\varphi_1-\psi_1)(t  u )}
\int_{N^+}f\big(x n a_{t}\big)\phi(n)\,d\mu_{gN^+,\lambda_1}{}(n)\\ =\sum_{Z\in\frak Z_\Ga}   
m_{\lambda_1,\nu_2}{}|_{ZN^+}(f)\, \mu_{ gN^+,\nu_1}{}|_{ZN}(\phi).
\end{multline*}
\end{prop}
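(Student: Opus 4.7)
The strategy is to adapt the thickening argument in the proof of Proposition \ref{p1}, replacing the PS measure $\mu_{gN^+,\nu_1}$ with the general $\mu_{gN^+,\lambda_1}$ while keeping $\mu_{gnN,\nu_2}$ as the transversal thickener. This naturally produces an integral against $m_{\lambda_1,\nu_2}$ rather than $\mathsf m$; the new step is then to evaluate this asymptotic by invoking Proposition \ref{p1} itself, which forces the appearance of the correction factor $e^{(\varphi_1-\psi_1)(tu)}$ and the asymmetric right-hand side $m_{\lambda_1,\nu_2}|_{ZN^+}(f)\,\mu_{gN^+,\nu_1}|_{ZN}(\phi)$.

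Concretely, fix $x=[g]$ and $\phi\in C_c(N^+_{\epsilon_0})$, and choose $R$, $\rho_{g,\epsilon_0}\in C_c(N_R)$ as in Lemma \ref{thickcor} and a nonnegative $q_\epsilon\in C_c(A_\epsilon M_\epsilon)$ with $\int q_\epsilon\,da\,dm=1$. Insert the normalization $1=\int_{NAM}\rho_{g,\epsilon_0}(h)q_\epsilon(am)/\mu_{gnN,\nu_2}(\rho_{g,\epsilon_0})\,dm\,da\,d\mu_{gnN,\nu_2}(h)$ into the left-hand side, define $\tilde\Phi_\epsilon$ and its $\Ga$-periodization $\Phi_\epsilon$ verbatim as in the proof of Proposition \ref{p1}, and use the continuity estimate $f(xna_t)\leq f_{3\epsilon}^+(xnhama_t)$ valid for $t\geq t_0(R,\epsilon)$. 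Applying Lemma \ref{lem.DC}(1) with $\psi_1$ replaced by $\varphi_1$ (so the exponential weight is $e^{(\varphi_1-\psi_2\circ\i)(\log a)}$, matching $\tilde m_{\lambda_1,\nu_2}$) yields
$$\int f(xna_t)\phi(n)\,d\mu_{gN^+,\lambda_1}(n)\leq e^{\epsilon\|\varphi_1-\psi_2\circ\i\|}\int_{\Ga\ba G}f_{3\epsilon}^+([g_0]a_t)\Phi_\epsilon([g_0])\,dm_{\lambda_1,\nu_2}([g_0]),$$
together with a matching lower bound via $f_{3\epsilon}^-$.

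Multiplying by $\Psi(t)e^{(\varphi_1-\psi_1)(tu)}$ and sending $t\to+\infty$, the desired limit reduces to the following shifted mixing statement for $m_{\lambda_1,\nu_2}$: for all $F,G\in C_c(\Ga\ba G)$,
$$\lim_{t\to+\infty}\Psi(t)\,e^{(\varphi_1-\psi_1)(tu)}\int_{\Ga\ba G}F(ya_t)G(y)\,dm_{\lambda_1,\nu_2}(y)=\sum_{Z\in\frak Z_\Ga}m_{\lambda_1,\nu_2}|_{ZN^+}(F)\,\mathsf m|_Z(G).$$
To prove this I would decompose the $m_{\lambda_1,\nu_2}$-integral via Lemma \ref{lem.DC}(1) into iterated $N^+$- and $NAM$-integrals, then compare $\mu_{\cdot N^+,\lambda_1}$ with $\mu_{\cdot N^+,\nu_1}$ by means of Lemma \ref{MANconj}: the $a_t$-conjugation of $N^+$ produces precisely the discrepancy factor $e^{(\varphi_1-\psi_1)(tu)}$ between the two scalings (since $d(\theta^{-1}_*\mu_{\cdot,\lambda_1})=e^{-\varphi_1(\log a_t)}d\mu_{\cdot a_tN^+,\lambda_1}$ versus $e^{-\psi_1(\log a_t)}d\mu_{\cdot a_tN^+,\nu_1}$). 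Proposition \ref{p1} then produces the $\mathsf m|_Z(G)$ factor. Finally, taking $G=\Phi_\epsilon$ and $F=f_{3\epsilon}^\pm$ and letting $\epsilon\to 0$, the identification $\mathsf m|_Z(\Phi_\epsilon)\to\mu_{gN^+,\nu_1}|_{ZN}(\phi)$ (by the same end-of-proof calculation used in Proposition \ref{p1}) yields Proposition \ref{p2}.

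\textbf{Main obstacle.} The principal difficulty is the shifted mixing identity above, whose right-hand side is asymmetric: $m_{\lambda_1,\nu_2}$ is paired with $F$ while $\mathsf m$ is paired with $G$. This asymmetry reflects the fact that the transversal thickener involves the PS measure $\nu_2$, while $\lambda_1$ enters only through the $N^+$-direction. Tracking this asymmetry precisely, and producing the compensating factor $e^{(\varphi_1-\psi_1)(tu)}$ from Lemma \ref{MANconj}, is the technical core of the argument; everything else is a direct transcription of the proof of Proposition \ref{p1}.
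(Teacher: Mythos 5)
Your reduction to a ``shifted mixing statement'' for $m_{\lambda_1,\nu_2}$ is where the argument breaks down: that intermediate statement is essentially the content of Theorem \ref{prop.mixH000} itself (with $\lambda_2=\nu_2$), and the method you propose for it does not close. Lemma \ref{MANconj} only relates each family of leafwise measures to \emph{itself} under $a_t$-conjugation --- it converts $\mu_{gN^+,\lambda_1}$ into $e^{-\varphi_1(\log a_t)}\mu_{ga_tn_0N^+,\lambda_1}$, still a $\lambda_1$-measure --- so ``comparing $\mu_{\cdot N^+,\lambda_1}$ with $\mu_{\cdot N^+,\nu_1}$'' is not something it can do. There is no measure-level relation between $\lambda_1$ and $\nu_1$ available here (no absolute continuity; $\lambda_1$ may charge points off $\La$ where $\nu_1$ has no mass), and the only dynamical input, Hypothesis \eqref{eq.hypo} via Proposition \ref{p1}, lives entirely on the $\nu_1$ side. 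Writing the ratio of the two Jacobians as $e^{(\varphi_1-\psi_1)(tu)}$ explains the shape of the normalizing factor but does not produce a proof.

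The missing idea, which is the actual technical core of the paper's proof, is a \emph{localized swap of the test function} rather than of the measure. One first unfolds $f=\sum_\gamma\tilde f(\gamma\,\cdot\,)$ with $\tilde f$ supported in a target box $g_0P_{\epsilon_0}N^+_{\epsilon_0}$, writes $\gamma g a_t=g_0p_{t,\gamma}n_{t,\gamma}$, and conjugates each term by $a_t$ via Lemma \ref{MANconj}; after this step the $\lambda_1$-data appears only through the scalars $c(p):=\int_{N_{\epsilon_0}^+}\tilde f(g_0pv)\,d\mu_{g_0pN^+,\lambda_1}(v)$, $p\in P_{\epsilon_0}$. One then builds an auxiliary function $\tilde F(g_0pn)=\rho(n)\,c(p)/\mu_{g_0pN^+,\nu_1}(\rho)$ whose $N^+$-integral against $\nu_1$ reproduces $c(p)$ (up to an error $\tilde h$ vanishing on $\bigcup_Z ZN^+$, using $\supp\nu_1=\La$), conjugates back --- this second application of Lemma \ref{MANconj}, now for $\nu_1$, is what actually produces the factor $e^{(\varphi_1-\psi_1)(\log a_t)}$ as the product $e^{-\varphi_1}\cdot e^{+\psi_1}$ --- and only then applies Proposition \ref{p1} to $F+h$ and $\phi^+_\epsilon$. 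The identification $\mathsf m|_Z(F+h)=m_{\lambda_1,\nu_2}|_{ZN^+}(f)$ at the end comes from Lemma \ref{lem.DC}. Without this swap your plan has no mechanism for converting the expanding-direction $\lambda_1$-integral into something Proposition \ref{p1} can see, so the proposal as written has a genuine gap. Your transversal-thickening step and the identification of the correction factor are correct but are the easy part of the argument.
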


\begin{proof} For $\epsilon_0>0$, set  $\scrB_{\epsilon_0}=P_{\e_0}N_{\epsilon_0}^+$.
Given $x_0\in \GaG$, let $\epsilon_0(x_0) $ denote the maximum number $r$ such that the map $G\rightarrow\GaG$ given by $h\mapsto x_0 h$ for $h\in G$ is injective on $\scrB_{r}$.
By using a partition of unity if necessary, it suffices to prove that for any $x_0\in \GaG$ and $\e_0=\e_0(x_0)$, the claims of the proposition hold for any non-negative $f\in C(x_0 \scrB_{\epsilon_0})$, non-negative $\phi\in C(N_{\epsilon_0}^+)$, and $x=[g]\in x_0 \scrB_{\epsilon_0}$.
Moreover, we may assume that $f$ is given as
\begin{equation*}
f([g])=\sum_{\gamma \in \Gamma} \tilde{f} (\gamma g)\qquad \text{ for all }  g\in G,
\end{equation*}
for some non-negative $\tilde{f}\in C_c(g_0\scrB_{\epsilon_0}) $.
For simplicity of notation, we write
$\mu_{\lambda_1}=\mu_{gN^+, \lambda_1}$. Note that for $x=[g] \in  [g_0] \scrB_{\epsilon_0}$,
\begin{align}\label{eq.un1}
\int_{N^+} f([g]n a_{t})\phi(n)\,d\mu_{\lambda_1}{}(n)=\sum_{\gamma \in \Gamma} \int_{N^+} \tilde{f}(\gamma g n a_{t})\phi(n)\,d\mu_{\lambda_1}{}(n).
\end{align}
Note that $\tilde{f}(\gamma g n a_{t})=0$ unless $\gamma g n a_{t}
\in g_0 \scrB_{\epsilon_0}$.
Together with the fact that $\mathrm{supp}(\phi)\subset 
N_{\epsilon_0}^+
$, it follows that the summands in \eqref{eq.un1} are non-zero only for finitely many elements $\gamma \in \Gamma \cap  g_0 \scrB_{\epsilon_0}
a_{-t}  N_{\epsilon_0}^+ g^{-1}$.

Suppose $\gamma g N_{\epsilon_0}^+ a_{t}
\cap g_0 \scrB_{\epsilon_0}\neq\emptyset$. 
Then $\gamma g a_{t}\in g_0 P_{\epsilon_0}N^+$, and there are unique elements
$p_{t,\gamma}\in P_{\epsilon_0}$ and $n_{t,\gamma}\in N^+$ such that
\begin{equation*}
\gamma g a_{t} = g_0 p_{t,\gamma}n_{t,\gamma} \in g_0 P_{\epsilon_0} N^+.
\end{equation*}
Let ${\Gamma_{t}}$ denote the subset $\Gamma \cap g_0 (P_{\epsilon_0}N^+ )a_t^{-1}
 g^{-1}$.
Note that although $
\Gamma_{t}$ may possibly be infinite, only finitely many of the terms in the sums we consider will be non-zero.
This together with Lemma \ref{MANconj} gives
\begin{align*}
&\int_{N^+} f([g]n a_{t})\phi(n)\,d\mu_{\lambda_1}{}(n)=\sum_{\gamma \in \Gamma} \int_{N^+} \tilde{f}(\gamma g n a_{t})\phi(n)\,d\mu_{\lambda_1}{}(n)\\
&=\sum_{\gamma \in\Gamma_{t} } \int_{N^+} \tilde{f}(\ga g a_t(a_t^{-1} n a_t)\big)\phi(n)\,d\mu_{\lambda_1}{}(n)\\
&=e^{-\varphi_1(\log a_t)} \sum_{\gamma \in 
\Gamma_{t}} \int_{N^+} \tilde{f}(\gamma g a_t n)\phi(a_t n a_t^{-1})\,d\mu_{ga_tN^+,\lambda_1}{}(n) \\ 
&= e^{-\varphi_1(\log a_t)}
\sum_{\gamma \in  \Gamma_{t}} \int_{N^+} \tilde{f}\big(g_0p_{t,\gamma}n_{t,\gamma} n\big)\phi(a_t n a_t^{-1})\,d\mu_{ga_tN^+,\lambda_1}{}(n) \\
&={e^{-\varphi_1(\log a_t)}} \sum_{\gamma \in  {\Gamma_{t} }
} \int_{N^+} \tilde{f}\big(g_0p_{t,\gamma} n\big)\phi\big( a_t \,n_{t,\gamma}^{-1}n\; a_t^{-1}\big)\,d\mu_{g_0p_{t,\gamma}N^+,\lambda_1}{}(n).\end{align*}
Since $\supp(\tilde{f})\subset g_0 \scrB_{\epsilon_0}$, we have 
\begin{align*}
 \sum_{\gamma \in {\Gamma_{t}}}& \int_{N^+} \tilde{f}\big(g_0p_{t,\gamma} n\big)\phi\big( a_t \,n_{t,\gamma}^{-1}n\; a_t^{-1}\big)\,d\mu_{g_0p_{t,\gamma}N^+,\lambda_1}{}(n)
\\\leq&\sum_{\gamma \in  \Gamma_{t} } \left(\sup_{n\in N_{\epsilon_0}^+} \phi\big(
a_t \,n_{t,\gamma}^{-1}\;a_t^{-1}(a_t na_t^{-1}) \big) \right)\cdot \int_{N^+} \tilde{f}\big(g_0p_{t,\gamma} n\big)\,d\mu_{g_0p_{t,\gamma}N^+,\lambda_1}{}(n).
\end{align*}
Since $u$ belongs to $\op{int}\scrL_{\Gamma}$, there exist $t_0>0$ and $\alpha>0$
such that
\begin{equation*}
a_t N_r^+ a_t^{-1}\subset N_{r e^{-\alpha t}}^+\qquad \text{ for all }\, r>0\text{ and }t>t_0.
\end{equation*}
Therefore, for all $n\in N_{\epsilon_0}^+$
and $t>t_0$,
we have
\begin{align}\label{eq.x1}
  \phi\big(a_t\,n_{t,\gamma}^{-1}a_t^{-1}(a_t na_t^{-1}) \big)\leq  \phi_{\epsilon_0 e^{-\alpha t}}^+\big(a_t\,n_{t,\gamma}^{-1}\;a_t^{-1} \big),
\end{align}
where
\begin{equation*}
\phi^+_{\epsilon}(n):=\sup_{b\in N_{\epsilon}^+} \phi(nb)\qquad\text{ for all } n\in N^+,\,\epsilon>0.
\end{equation*}
We now have the following inequality for $t>t_0$:
\begin{align}\label{intchain}
&e^{\varphi_1(\log a_t)}
 \,\int_{N^+} f([g] n a_{t})\phi(n)\,d\mu_{\lambda_1}{}(n)
\notag \\
 &\leq\sum_{\gamma \in \Gamma_{t}} \phi_{\epsilon_0 e^{-\alpha t}}^+\big(a_t\,n_{t,\gamma}^{-1}\;a_t^{-1}
\big)\int_{N_{\epsilon_0}^+} \tilde{f}\big(g_0p_{t,\gamma} n\big)\,d\mu_{g_0p_{t,\gamma}N^+,\lambda_1}{}(n).
\end{align}
By Lemma \ref{thickcor}, we can now choose $R>0$ and $\rho\in C_c(N_R^+)$ such that $\rho(n)\geq 0$ for all $n\in N^+$, and $\mu_{ g_0pN^+,\nu_1}{}(\rho)>0$ for all $p\in P_{\epsilon_0}$.
Define $\tilde{F}\in C_c(g_0 P_{\epsilon_0}N_R^+)$ by
\begin{align*}
&\tilde{F}(g)=
\begin{cases} \tfrac{\rho(n)}{\mu_{ g_0pN^+,\nu_1}(\rho)}\int_{N_{\epsilon_0}^+} \tilde{f}\big(g_0p v\big)\,d\mu_{g_0pN^+,\lambda_1}
(v)&\text{if }g=g_0pn\in g_0 P_{\epsilon_0}N_R^+
\\
0&
\text{if }g\not\in g_0 P_{\epsilon_0}N_R^+.
\end{cases}
\end{align*}

We claim that for all $p\in P_{\epsilon_0}$ and $Z\in\frak Z_\Ga$ such that $g_0p^-\in\La$,
\begin{multline}\label{eq.1}
\int_{N^+} \tilde{F} (g_0pn)\,
d\mu_{g_0pN^+,\nu_1}|_{Z}
(n)=\int_{N_R^+} \tilde{F} (g_0pn)\,
d\mu_{g_0pN^+,\nu_1}|_Z
(n)\\
=\int_{N_{\epsilon_0}^+}(\tilde{f}\mathbbm{1}_{ZN^+})(g_0 pn)\,
d\mu_{g_0pN^+,\lambda_1}
(n).
\end{multline}
Indeed, by the assumption $\supp\nu_1=\La$ and the fact $\Omega\cap ZN^+=Z$, we have the identity $\mathbbm{1}_Z(g_0pn)\,d\mu_{g_0pN^+,\nu_1}(n)=\mathbbm{1}_{ZN^+}(g_0p)\,d\mu_{g_0pN^+,\nu_1}(n)$ and hence
\begin{align*}
   &  \int_{N^+} \tilde{F} (g_0pn)\,
d\mu_{g_0pN^+,\nu_1}|_{Z}
(n) \\
&=\int_{N^+}  \tilde{F} (g_0pn)  \mathbbm{1}_Z(g_0pn) \,
d\mu_{g_0pN^+,\nu_1}
(n)
\\
&=\int_{N^+}\tfrac{\rho(n)\mathbbm{1}_{ZN^+}(g_0p)}{\mu_{ g_0pN^+,\nu_1}(\rho)}\left(\int_{N_{\epsilon_0}^+} \tilde{f}\big(g_0p v\big)\,d\mu_{g_0pN^+,\lambda_1}
(v)\right)d\mu_{g_0pN^+,\nu_1}(n)
\\
&=\int_{N^+}\tfrac{\rho(n)}{\mu_{ g_0pN^+,\nu_1}(\rho)}\left(\int_{N_{\epsilon_0}^+} (\tilde{f}\mathbbm{1}_{ZN^+})\big(g_0p v\big)\,d\mu_{g_0pN^+,\lambda_1}
(v)\right)d\mu_{g_0pN^+,\nu_1}(n)
\\
&=\int_{N_{\epsilon_0}^+} (\tilde{f}\mathbbm{1}_{ZN^+})\big(g_0p v\big)\,d\mu_{g_0pN^+,\lambda_1}
(v).
\end{align*}

Summing up \eqref{eq.1} for all $Z\in\frak Z_\Ga$ and using $\supp\nu_1=\La$, we get
\begin{align*}
&\int_{N^+} \tilde{F} (g_0pn)\,
d\mu_{g_0pN^+,\nu_1}
(n)\\&=\sum_{Z\in\frak Z_\Ga}\int_{N^+} \tilde{F} (g_0pn)\,
d\mu_{g_0pN^+,\nu_1}|_{Z}
(n)\\
&=\sum_{Z\in\frak Z_\Ga}\int_{N_{\epsilon_0}^+}(\tilde{f}\mathbbm{1}_{ZN^+})(g_0 pn)\,
d\mu_{g_0pN^+,\lambda_1}
(n).
\end{align*}
Hence we can write
\begin{align*}
&    \int_{N_{\epsilon_0}^+}\tilde{f}(g_0 pn)\,
d\mu_{g_0pN^+,\lambda_1}
(n)\\
&=\int_{N^+} \tilde{F} (g_0pn)\,
d\mu_{g_0pN^+,\nu_1}
(n)+\int_{N_{\e_0}^+}\tilde h(g_0pn)d\mu_{g_0pN^+,\lambda_1}
(n)
\end{align*}
for some $\tilde h$ that vanishes on $\bigcup_{Z\in\frak Z_\Ga} ZN^+$.
Returning to \eqref{intchain}, we now give an upper bound.
We observe: \begin{align*}
& e^{\varphi_1(\log a_t)} \int_{N^+} f([g] n a_{t})\phi(n)\,d\mu_{\lambda_1}{}(n)\\
\leq&\,  
\sum_{\gamma \in \Gamma_{t}
} \phi_{\epsilon_0 e^{-\alpha t}}^+\big(a_t
\,n_{t,\gamma}^{-1}\;a_t^{-1}
\big)\int_{N_{\epsilon_0}^+} \tilde{f}\big(g_0p_{t,\gamma} n\big)\,d\mu_{\lambda_1}{}(n)\\
=&\,\sum_{\gamma \in \Gamma_{t}} \phi_{\epsilon_0 e^{-\alpha t}}^+\big(a_t\,n_{t,\gamma}^{-1}\;a_t^{-1}\big)
\int_{N_R^+} (\tilde{F}+\tilde h) (g_0p_{t,\gamma}  n)\,d\mu_{g_0p_{t,\gamma}N^+,\nu_1}{}(n)
\\=&\,
\sum_{\gamma \in \Gamma_{t}
} \int_{N_R^+} (\tilde{F}+\tilde h) (g_0p_{t,\gamma}  n)\phi_{\epsilon_0 e^{-\alpha t}}^+\big(a_t\,n_{t,\gamma}^{-1}\;a_t^{-1}
\big)\,d\mu_{g_0p_{t,\gamma}N^+,\nu_1}{}(n).
\end{align*}
Similarly as before, we have, for all $t>t_0 $ and $\,n\in N_R^+$,
\begin{align}\label{eq.x2}
\phi_{\epsilon_0 e^{-\alpha t}}^+\big(a_t
\,n_{t,\gamma}^{-1}\;a_t^{-1}
\big)&=\phi_{\epsilon_0 e^{-\alpha t}}^+\big(a_t\,n_{t,\gamma}^{-1}n(n)^{-1}\;a_t^{-1}\big)\notag\\
&\leq \phi_{(R+\epsilon_0) e^{-\alpha t}}^+\big(a_t
\,n_{t,\gamma}^{-1}n\;a_t^{-1}\big).
\end{align}
Hence \eqref{intchain} is bounded above by
\begin{align*}
&\leq 
\begin{multlined}[t]
\sum_{\gamma \in \Gamma_{t}
} \int_{N_R^+} (\tilde{F}+\tilde h) (g_0p_{t,\gamma}  n)\phi_{(R+\epsilon_0) e^{-\alpha t}}^+\big(a_t\,n_{t,\gamma}^{-1}n\;a_t^{-1}
\big)\,d\mu_{g_0p_{t,\gamma}N^+,\nu_1}{}(n)
\end{multlined}
\\&=
\begin{multlined}[t]
\sum_{\gamma \in \Gamma_{t}
} \int_{N^+} (\tilde{F}+\tilde h) \big(g_0p_{t,\gamma} n_{t,\gamma}a_t^{-1} n a_t\big)\phi_{(R+\epsilon_0) e^{-\alpha t}}^+(n) \,
d((\theta_{t,\ga})^{-1}_*\mu_{g_0p_{t,\gamma}N^+,\nu_1})(n)
\end{multlined}
\end{align*}
where $\theta_{t,\ga}(n)=n_{t,\ga}a_t^{-1}na_t$.
By Lemma \ref{MANconj},
\begin{equation*}
d( (\theta_{t,\ga})^{-1}_*\mu_{g_0p_{t,\gamma}N^+,\nu_1})(n)
=e^{\psi_1(\log  a_t)}d\mu_{g_0p_{t,\gamma} n_{t,\gamma}a_t^{-1} N^+,\nu_1}{}(n).
\end{equation*}
Since  $g_0p_{t,\gamma}n_{t,\gamma}a_t^{-1}=\gamma g$, it follows that for all $t>t_0$,
\begin{align*}
 & e^{(\varphi_1-\psi_1)(\log a_t)} \int_{N^+} f([g] n a_{t})\phi(n)\,d\mu_{\lambda_1}{}(n)\,  \\ \le 
  &\sum_{\gamma \in \Gamma_{t}} \!\int_{N^+} (\tilde{F}+\tilde h) (\gamma g na_{t} )\phi_{(R+\epsilon_0) e^{-\alpha t}}^+(n)\,d\mu_{\gamma gN^+,\nu_1}{}(n)\\
\leq&\, \int_{N^+}\left(\sum_{\gamma \in \Gamma} (\tilde{F}+\tilde h) (\gamma g na_{t} )\right)\phi_{(R+\epsilon_0) e^{-\alpha t}}^+(n)\,d\mu_{\nu_1}{}(n).
\end{align*}
Define functions $F$ and $h$ on $\GaG$ by
\begin{equation*}
F([g]):= \sum_{\gamma\in\Gamma} \tilde{F}(\gamma g)\quad \text{ and } \quad h([g]):= \sum_{\gamma\in\Gamma} \tilde{h}(\gamma g).
\end{equation*}
Then for any $\epsilon>0$ and for all $t>t_0$ such that $(R+\epsilon_0)e^{-\alpha t} \leq \epsilon$,
\begin{multline*}
\Psi(t) e^{(\varphi_1-\psi_1)(\log a_t)}\int_{N^+}f([g] n a_{t})\phi(n)\,d\mu_{\lambda_1}{}(n)
\\ \leq \, \Psi(t)\int_{N^+} (F+h)([g]n  a_t )\phi^+_{\epsilon}(n)\,d\mu_{\nu_1}{}(n).
\end{multline*}
By Proposition \ref{p1}, letting $\epsilon\rightarrow0$ gives
\begin{multline*}
\limsup_{t\rightarrow +\infty} \Psi(t) e^{(\varphi_1-\psi_1)(\log a_t)}\int_{N^+}f([g]n  a_{t} )\phi(n)\,d\mu_{\lambda_1}{}(n)\\ \leq \sum_{Z\in
\frak Z_\Ga}
{\mathsf m}|_Z(F+h)\,\mu_{\nu_1}{}|_{ZN}(\phi).
\end{multline*}

Note that
$\mathsf m^*=\mathsf m$ by Lemma \ref{dudu}.
Now, by  Lemma \ref{lem.DC} and the fact $\tilde{\mathsf{m}}(\tilde h)=0$, we have 
\begin{align*}
&{\mathsf m}|_Z(F+h)=\tilde{\mathsf m}|_{\tilde Z}(\tilde{F}+\tilde h)=\tilde{\mathsf m}|_{\tilde Z}(\tilde{F})
=\tilde{\mathsf m}^*|_{\tilde Z}(\tilde{F})
\\
&=\int_{ P} \left(\int_{N^+} \tilde{F}\mathbbm{1}_{\tilde Z}(g_0hamn)\,d\mu_{g_0 hamN^+,\nu_1}^{}(n)\right)e^{-\psi_2\circ\i(\log a)}\,dm\,da\,d\mu_{g_0N,\nu_2}{}(h)
\\&=\int_{ P} \left(\int_{N^+} (\tilde{f}\mathbbm{1}_{ZN^+})(g_0hamn)\,d\mu_{g_0hamN^+,\lambda_1}{}(n)\right) e^{-\psi_2\circ\i(\log a)}\,dm\,da\,d\mu_{g_0N,\nu_2}{}(h)
\\
&=\tilde{m}_{\lambda_1,\nu_2}{}|_{\tilde ZN^+}(\tilde{f})=m_{\lambda_1,\nu_2}{}|_{ZN^+}(f).
\end{align*}

This gives the desired upper bound.
Note that we have used the assumption $\supp\nu_2=\La$ in the fourth equality above to apply \eqref{eq.1}.
The lower bound can be obtained similarly, finishing the proof.
\end{proof}

With the help of Proposition \ref{p1}, we are now ready to give:

\noindent{\bf Proof of Theorem \ref{prop.mixH000}}
By the compactness hypothesis on the supports of $f_i$, we can find $\epsilon_0>0$ and $x_i\in\Ga\ba G$, $i=1,\cdots , \ell$ such that the map $G\to\Ga\ba G$ given by $g\to x_ig$ is injective on $R_{\epsilon_0}=P_{ \epsilon_0}N_{ \epsilon_0}^+$, and $\bigcup_{i=1}^\ell x_i R_{\epsilon_0/2}$ contains both $\supp f_1$ and $\supp f_2$. 
We use continuous partitions of unity to write $f_1$ and $f_2$ as finite sums $f_1=\sum_{i=1}^\ell f_{1,i}$ and $f_2=\sum_{j=1}^\ell f_{2,j}$ with $\supp f_{1,i}\subset x_i R_{\epsilon_0/2}$  and $\supp f_{2,j}\subset x_j R_{\epsilon_0/2}$.
Writing $p=ham\in NAM$ and using Lemma \ref{lem.DC},
$$dm_{\lambda_1,\lambda_2}^*(hamn)=d\mu_{hamN^+,\lambda_1}{}(n)e^{-\psi_2\circ\i(\log a)}\,dm\,da\,d\mu_{N,\lambda_2}{}(h).$$

We have
\begin{align}\label{eq.pu1}
&\int_{\GaG} f_1 (x
a_t
)f_2(x)\,dm_{\lambda_1,\lambda_2}^*(x)=\\
&\sum_{i,j} \int_{R_{\e_0}} f_{1,i} (x_jpna_t)f_{2,j} (x_jpn )d\mu_{hamN^+,\lambda_1}{}(n)e^{-\psi_2\circ\i(\log a)}\,dm\,da\,d\mu_{N,\lambda_2}{}(h)\notag
\\&=\sum_{i,j} \int_{N_{\e_0}A_{\e_0}M_{\e_0}}\left( \int_{N_{\e_0}^+} f_{1,i} (x_j pna_t)f_{2,j} (x_jpn )\,d\mu_{hamN^+,\lambda_1}{}(n)\right)\notag
 \\
 &\qquad\qquad\qquad\qquad\qquad\qquad\qquad\qquad \times e^{-\psi_2\circ\i(\log a)}\,dm\,da\,d\mu_{N,\lambda_2}{}(h).\notag
\end{align}
Applying Proposition \ref{p2}, it follows:

\begin{multline*}
\lim_{t\rightarrow\infty} \Psi(t)
e^{(\varphi_1-\psi_1)(\log a_t)} \int_{\GaG} f_1 (x a_t)f_2(x)\,dm_{\lambda_1,\lambda_2}^*(x)
\\ = \sum_j\sum_{ Z\in\frak Z_\Ga} m _{\lambda_1,\nu_2}{}|_{ZN^+}(f_{1,j})\sum_i\int_{N_{\e_0}A_{\e_0}M_{\e_0}} \mu_{x_ipN^+,\nu_1}{}|_{ZN} (f_{2,i}(x_j p\,\cdot\,) )
\\ e^{-\psi_2\circ\i(\log 
a)}\,dm\,da\,d\mu_{N,\lambda_2}{}(h)
\\=\sum_{Z\in\frak Z_\Ga} m _{\lambda_1,\nu_2}|_{ZN^+}(f_{1})\sum_i\int_{N_{\e_0}A_{\e_0}M_{\e_0}} \mu_{x_ipN^+,\nu_1}{}(f_{2,i}\mathbbm{1}_{ZN}(x_j p\,\cdot\,) )\\
e^{-\psi_2\circ\i(\log 
a)}\,dm\,da\,d\mu_{N,\lambda_2}{}(h)
\\=\sum_{Z\in\frak Z_\Ga} m _{\lambda_1,\nu_2}{}|_{ZN^+}(f_{1})\sum_i m _{\nu_1,\lambda_2}^*{}(f_{2,i}\mathbbm{1}_{ZN})=\sum_{Z\in\frak Z_\Ga}m _{\lambda_1,\nu_2}|_{ZN^+}(f_{1})  m _{\nu_1,\lambda_2}^*|_{ZN}(f_{2}) 
\end{multline*}
where the second last equality is valid by 
%\eqref{BRPN}.
Lemma \ref{lem.DC}.
This completes the proof.$\qed$

\section{Local mixing for Anosov groups}\label{sec:lo}
Let $\Ga<G$ be a Zariski dense Anosov subgroup with respect to $P$.
For any $u\in \inte\L_\Ga$, there exists a unique $$\psi=\psi_u\in D_\Ga^\star$$ such that $\psi(u)=\psi_\Ga(u)$ \cite[Prop. 4.4]{LO}.
Let $\nu_\psi$ denote the unique $(\Ga,\psi)$-$\PS$ measure \cite[Thm. 1.3]{LO}. Similarly, $\nu_{\psi \circ \i}$ denotes the unique $(\Ga, \psi \circ \i)$-$\PS$-measure.

In this section, we deduce ($r:=\op{dim}\frak a$):
\begin{Thm}[Local mixing]  \label{thm.GE} 
For $i=1,2$, let $\varphi_i\in\fa^*$ and $\lambda_{\varphi_i}$ be any $(\Ga,\varphi_i)$-conformal measure on $\cal F$. For any  $u\in \inte\L_\Ga$, there exists $\kappa_u>0$ such that
for any $f_1, f_2\in C_c(\Gamma\ba G)$, we have
\begin{multline*}
\lim_{t\rightarrow +\infty} t^{(r-1)/2}e^{(\varphi_1-\psi_u)(tu)}  \int_{\GaG} f_1 (x \exp(tu))f_2(x)\,dm^*_{\lambda_{\varphi_1},\lambda_{\varphi_2}}(x)
\\ =\kappa_u \sum_{Z\in\frak Z_\Ga}\,m_{\lambda_{\varphi_1},\nu_{\psi_u\circ\i}}|_{ZN^+}(f_1)\,m_{\nu_{\psi_u},\lambda_{\varphi_2}}^*|_{ZN}(f_2).
\end{multline*}
\end{Thm}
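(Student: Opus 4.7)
The plan is to deduce Theorem \ref{thm.GE} directly from Theorem \ref{prop.mixH000}, with the local mixing hypothesis \eqref{eq.hypo} supplied by the local mixing theorem for Anosov BMS measures established in our previous work \cite{ELO}.

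Since $\Ga$ is Zariski dense Anosov and $u\in\inte\L_\Ga$ is a unit vector, there is a unique $\psi_u\in\dg$ with $\psi_u(u)=\psi_\Ga(u)$, and associated PS measures $\nu_{\psi_u}$ and $\nu_{\psi_u\circ\i}$. Set $\psi_1:=\psi_u$, $\psi_2:=\psi_u\circ\i$, $\nu_1:=\nu_{\psi_u}$, $\nu_2:=\nu_{\psi_u\circ\i}$. Since $\psi_2=\psi_1\circ\i$, Lemma \ref{dudu} gives $\mathsf m:=m_{\nu_1,\nu_2}=m^*_{\nu_1,\nu_2}$, so the symmetry assumption underlying Theorem \ref{prop.mixH000} is fulfilled. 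To apply that theorem, it suffices to verify the local mixing hypothesis \eqref{eq.hypo} for $\mathsf m$ along $\{a_t=\exp(tu)\}$ with rate function
$$\Psi(t):=\kappa_u^{-1}\,t^{(r-1)/2}$$
for some constant $\kappa_u>0$ to be identified.

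This statement is precisely the main local mixing theorem for Anosov BMS measures proved in \cite{ELO}: exploiting the Anosov property, one codes the $A$-action on $\GaG$ symbolically, reducing the asymptotic to a local central limit theorem for the Cartan projection of $\Ga$ in the direction $u$. The factor $t^{(r-1)/2}$ arises from an $(r-1)$-dimensional Gaussian renormalization on the hyperplane transverse to $u$ (where $r=\dim\fa$), and $\kappa_u$ is expressed in terms of the inverse square root of the Hessian determinant of $\psi_\Ga$ at $u$ restricted to the unit sphere. The sum over $Z\in\frak Z_\Ga$ on the right-hand side of \eqref{eq.hypo} captures precisely the $P^\circ$-minimal component decomposition, which records the exact obstruction to full $A$-mixing.

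Granting this input, Theorem \ref{prop.mixH000} applied with the above $\psi_i$ and $\nu_i$ yields, for all $f_1,f_2\in C_c(\GaG)$,
\begin{multline*}
\lim_{t\to+\infty}\kappa_u^{-1}\,t^{(r-1)/2}\,e^{(\varphi_1-\psi_u)(tu)}\int_{\GaG}f_1(xa_t)f_2(x)\,dm^*_{\lambda_{\varphi_1},\lambda_{\varphi_2}}(x)\\
=\sum_{Z\in\frak Z_\Ga}m_{\lambda_{\varphi_1},\nu_{\psi_u\circ\i}}\big|_{ZN^+}(f_1)\,m^*_{\nu_{\psi_u},\lambda_{\varphi_2}}\big|_{ZN}(f_2),
\end{multline*}
and multiplying both sides by $\kappa_u$ gives the desired formula. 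The main obstacle is not in the present deduction, which is a bookkeeping exercise with the identifications $\psi_1=\psi_u$, $\psi_2=\psi_u\circ\i$, but in justifying the local mixing input for the Anosov BMS measure; this is precisely the substantive technical ingredient imported from \cite{ELO}, and the passage from that BMS measure to the fully general conformal pair $(\lambda_{\varphi_1},\lambda_{\varphi_2})$ is what Propositions \ref{p1} and \ref{p2} (culminating in Theorem \ref{prop.mixH000}) accomplish.
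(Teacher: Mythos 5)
Your reduction is exactly the paper's: set $\psi_1=\psi_u$, $\psi_2=\psi_u\circ\i$, take the PS measures $\nu_{\psi_u},\nu_{\psi_u\circ\i}$, verify hypothesis \eqref{eq.hypo} with $\Psi(t)=\kappa_u^{-1}t^{(r-1)/2}$, and apply Theorem \ref{prop.mixH000}; the bookkeeping with $\varphi_1-\psi_u$ and the limit measures is carried out just as you say. The one substantive problem is where you source the hypothesis. You claim \eqref{eq.hypo} for $\mathsf m=m_{\nu_{\psi_u},\nu_{\psi_u\circ\i}}$ "is precisely the main local mixing theorem proved in \cite{ELO}". It is not: the local mixing theorem of \cite{ELO} is for the BMS measure on $\Ga\ba G/M$ (equivalently, for $M$-invariant test functions), and its limit is a single product $\mathsf m(f_1)\mathsf m(f_2)$ with no decomposition. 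Hypothesis \eqref{eq.hypo} is a strictly stronger statement on $\Ga\ba G$: the limit splits as a sum over $Z\in\frak Z_\Ga$, i.e., over the $P^\circ$-minimal components, which records the failure of the one-parameter flow $\exp(tu)$ to mix across the finitely many $M$-ergodic pieces of the BMS measure (cf. \cite{LO2}). Passing from the $\Ga\ba G/M$ statement to this refined statement is not a formality -- if it were, the paper would not need the external input -- and as written your verification of \eqref{eq.hypo} has a gap at exactly this point.

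The correct input is the theorem of Chow and Sarkar (\cite{CS}; Theorem \ref{CS} in the paper), which gives local mixing of $x\mapsto x\exp(tu)$ on $\Ga\ba G$ for $m_{\nu_{\psi_u},\nu_{\psi_u\circ\i}}$ with rate $t^{(r-1)/2}$ and with the limit expressed as $\kappa_u\sum_{Z\in\frak Z_\Ga}\mathsf m|_Z(f_1)\,\mathsf m|_Z(f_2)$; this is precisely \eqref{eq.hypo} with $\Psi(t)=\kappa_u^{-1}t^{(r-1)/2}$. With that citation in place of \cite{ELO}, your remaining two lines (Lemma \ref{dudu} to identify $\mathsf m=\mathsf m^*$, then Theorem \ref{prop.mixH000} and multiplication by $\kappa_u$) coincide with the paper's proof. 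Your parenthetical description of $\kappa_u$ via a Hessian of $\psi_\Ga$ is not needed and is not established in the paper; it is safer to leave $\kappa_u$ as the constant furnished by Theorem \ref{CS}.
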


Theorem \ref{thm.GE} is a consequence of Theorem \ref{prop.mixH000}, 
since the measure $\mathsf m=m_{\nu_{\psi_u},\nu_{\psi_u\circ\i}}$ satisfies the Hypothesis \ref{eq.hypo} by the following theorem of Chow and Sarkar.
\begin{thm} \cite{CS}\label{CS}  Let $u\in \inte\L_\Ga$. There exists $\kappa_u>0$
such that
for any $f_1, f_2\in C_c(\Gamma\ba G)$, we have
\begin{multline*}
\lim_{t\rightarrow +\infty} t^{(r-1)/2}  \int_{\GaG} f_1 (x \exp (tu) )f_2(x)\,dm_{\nu_{\psi_u},\nu_{\psi_u\circ\i}}{}(x)\\
=\kappa_u \sum_{Z\in\frak Z_\Ga}\,m_{\nu_{\psi_u},\nu_{\psi_u\circ\i}}{}|_Z(f_1)\, m_{\nu_{\psi_u},\nu_{\psi_u\circ\i}}{}|_Z(f_2).
\end{multline*}
\end{thm}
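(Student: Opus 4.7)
The plan is to prove Theorem~\ref{CS} via thermodynamic formalism combined with a multivariate local limit theorem for a vector-valued roof function on a symbolic model of the Anosov dynamics. Since $\Ga$ is Anosov, the boundary action of the word-hyperbolic group $\Sigma$ on $\La$ is uniformly expanding, and one can build a topologically mixing subshift of finite type $(\Sigma_A,\sigma)$ together with a H\"older continuous roof cocycle $\tau:\Sigma_A\to\fa$ such that the $A$-action on the non-wandering set $\Omega\subset \Ga\ba G/M$ is a finite-to-one factor of the $\fa$-suspension of $\sigma$ by $\tau$. Under this coding, $\mathsf m=m_{\nu_{\psi_u},\nu_{\psi_u\circ\i}}$ pulls back to the product of the Gibbs equilibrium state $\mu_u$ of the potential $-\psi_u\circ\tau$ with Lebesgue measure on $\fa$; the defining condition $\psi_u\in\dg$, $\psi_u(u)=\psi_\Ga(u)$, is precisely the assertion that the topological pressure $P(-\psi_u\circ\tau)$ vanishes and that the mean drift $\bar\tau=\int\tau\,d\mu_u$ is a positive multiple of $u$.

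After the Hopf parametrization, the correlation integral $\int_{\Ga\ba G}f_1(x\exp(tu))f_2(x)\,d\mathsf m(x)$ unfolds into a sum over $n\ge 0$ of pairings between $f_2$ and $f_1$ shifted by the Birkhoff sum $S_n\tau$, weighted by the Gibbs density and localized near $S_n\tau\approx tu$. Fourier inversion on $\fa\cong\bbr^r$ rewrites this as
\[
\sum_{n\ge 0}\int_{\fa^*}\la\mathcal L_{-\psi_u+i\xi}^n\Phi_2,\Phi_1\ra\, e^{-i\xi(tu)}\,d\xi,
\]
where $\mathcal L_\phi$ is the Ruelle transfer operator on $(\Sigma_A,\sigma)$ with potential $\phi$ and the $\Phi_j$ encode $f_j$ via the Markov section.

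The crucial step is the spectral analysis of the family $\xi\mapsto\mathcal L_{-\psi_u+i\xi}$. At $\xi=0$, Ruelle--Perron--Frobenius provides a simple leading eigenvalue $1$ with spectral gap, and Kato's holomorphic perturbation theory gives, near $\xi=0$,
\[
\lambda(i\xi)=1+i\xi(\bar\tau)-\tfrac12 Q(\xi)+O(|\xi|^3),
\]
where $Q$ is a positive semidefinite quadratic form on $\fa^*$ whose restriction to $\bar\tau^\perp$ is the asymptotic covariance of the vector-valued Birkhoff sums. Away from $\xi=0$, Zariski density of $\Ga$, via Benoist's theorem that the group generated by the Jordan projections of $\Ga$ is dense in $\fa$, yields a uniform bound $\|\mathcal L_{-\psi_u+i\xi}^n\|\le C\rho^n$ with $\rho<1$ on bounded subsets of $\fa^*\setminus\{0\}$; this is the essential non-arithmeticity input. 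Splitting the Fourier integral and rescaling $\xi\mapsto\xi/\sqrt t$ in the small regime reduces the problem to an $(r-1)$-dimensional Gaussian saddle-point computation on the slice $\xi\perp\bar\tau$ (the $u$-directional oscillation is absorbed by the linear phase $e^{-i\xi(tu)}$), producing the $t^{-(r-1)/2}$ decay with an explicit constant $\kappa_u$ determined by $\norm{\bar\tau}$ and $\det Q|_{\bar\tau^\perp}$.

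Finally, the leading spectral projection factors the main term as a product of two $\mathsf m$-integrals, and lifting the coding from $\Ga\ba G/M$ back to $\Ga\ba G$ introduces the $M$-action whose $M^\circ$-ergodic components on $\Omega$ are exactly $\frak Z_\Ga$, giving the decomposition $\sum_{Z\in\frak Z_\Ga}\mathsf m|_Z(f_1)\,\mathsf m|_Z(f_2)$. The principal obstacle is the uniform spectral estimate on $\mathcal L_{-\psi_u+i\xi}$ for $\xi$ in $\fa^*\setminus\{0\}$: one must exclude any proper closed subgroup of $\fa$ containing the essential range of all Birkhoff sums of $\tau$, and this is exactly where the full strength of Zariski density of $\Ga$ in the higher-rank $G$, filtered through Benoist's density theorem for Jordan projections, becomes indispensable.
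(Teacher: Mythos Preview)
The paper does not prove this theorem: it is quoted verbatim as a result of Chow and Sarkar \cite{CS} and used as a black-box input (it is precisely the verification of Hypothesis~\eqref{eq.hypo} needed to feed Theorem~\ref{prop.mixH000}). There is therefore no in-paper proof to compare your proposal against.

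That said, your outline is essentially the strategy of the cited reference: Markov/symbolic coding of the Anosov dynamics, transfer-operator analysis of the twisted family $\mathcal L_{-\psi_u+i\xi}$, Ruelle--Perron--Frobenius at $\xi=0$ with second-order perturbation giving the covariance form, non-arithmeticity from Benoist's density of Jordan projections to control $\xi\neq 0$, and a multivariate local limit theorem producing the $t^{-(r-1)/2}$ rate. The identification of the right-hand side with the sum over $\frak Z_\Ga$ is also correct in spirit, coming from the $M/M^\circ$-decomposition of the $P^\circ$-minimal sets. As a proof plan this is sound; just be aware that turning it into an actual proof is the content of an entire separate paper, and several points you pass over in a clause (existence and regularity of the Markov coding for Anosov subgroups, uniform Dolgopyat-type bounds away from $\xi=0$, handling the suspension over a non-compact abelian fiber $\fa$ rather than $\bbr$) are where the real work lies.
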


Let $m_o$ denote the $K$-invariant probability measure on $\F=G/P$.
Then  $m_o$ coincides with the
$(G, 2\rho)$-conformal measure on $\F$ where
$2 \rho$ denotes the sum of positive roots for $(\frak g, \fa^+)$. The corresponding BMS measure $dx=dm_{m_o,m_o}$ is
a $G$-invariant measure on $\Ga\ba G$. 
The measure $dm_{\nu_{\psi\circ \i}}^{\BR}=dm_{m_o,\nu_{\psi\circ \i}}$  was defined and  called the
 $N^+M$-invariant Burger-Roblin measure in \cite{ELO}. Similarly, the $NM$-invariant Burger-Roblin measure was defined as
 $dm^{\BR*}_{\nu_{\psi}} $. In these terminologies,  the following is a special case of Theorem \ref{thm.GE}:
\begin{cor}[Local mixing for the Haar measure]\label{above} For any $u\in \inte\L_\Ga$,
and for
 any $f_1, f_2\in C_c(\Gamma\ba G)$, we have
\begin{multline*}
\lim_{t\rightarrow +\infty} t^{(r-1)/2}e^{(2\rho-\psi_u)(tu)}  \int_{\GaG} f_1 (x \exp(tu))f_2(x)\,dx
\\ =\kappa_u \sum_{Z\in\frak Z_\Ga}\,m_{\nu_{\psi_u\circ \i}}^{\BR}|_{ZN^+}(f_1)\, m_{\nu_{\psi_u}}^{\BR*}|_{ZN}(f_2)
\end{multline*}
where $\kappa_u$ is as in Theorem \ref{CS}.
\end{cor}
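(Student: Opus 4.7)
The plan is to deduce Corollary \ref{above} as a direct specialization of Theorem \ref{thm.GE}, with the only substantive verification being that the $G$-invariant Haar measure $dx$ on $\Ga\ba G$ coincides with the generalized BMS$^*$-measure $m^*_{m_o,m_o}$, where $m_o$ is the $K$-invariant probability measure on $\F$ viewed as the $(G,2\rho)$-conformal measure.

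First I would recall that $m_o$ is the unique $(G,2\rho)$-conformal measure on $\F$, and observe that substituting $\psi_1=\psi_2=2\rho$ and $\nu_1=\nu_2=m_o$ into the definition \eqref{eq.BMS0} of $\tilde m_{\nu_1,\nu_2}$ yields, after unpacking the Hopf parametrization $G/M\simeq\F^{(2)}\times\fa$, a measure on $G$ that is $G$-invariant; hence it coincides with the Haar measure up to normalization. To switch from $m_{m_o,m_o}$ to $m^*_{m_o,m_o}$, I would verify that $2\rho\circ\i=2\rho$: this holds because $\i=-\op{Ad}_{w_0}$ sends positive roots to positive roots (as $w_0$ sends $\Delta^+$ to $\Delta^-$), hence permutes them, leaving their sum invariant. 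Lemma \ref{dudu} then gives $m^*_{m_o,m_o}=m_{m_o,m_o}=dx$.

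Next I would apply Theorem \ref{thm.GE} with $\varphi_1=\varphi_2=2\rho$ and $\lambda_{\varphi_1}=\lambda_{\varphi_2}=m_o$. This yields directly
\begin{multline*}
\lim_{t\to+\infty} t^{(r-1)/2} e^{(2\rho-\psi_u)(tu)}\int_{\Ga\ba G} f_1(x\exp(tu)) f_2(x)\,dx \\
= \kappa_u\sum_{Z\in\frak Z_\Ga} m_{m_o,\nu_{\psi_u\circ\i}}|_{ZN^+}(f_1)\, m^*_{\nu_{\psi_u},m_o}|_{ZN}(f_2).
\end{multline*}
To conclude, I would invoke the definitions stated in the excerpt, namely $m^{\BR}_{\nu_{\psi_u\circ\i}}:=m_{m_o,\nu_{\psi_u\circ\i}}$ (the $N^+M$-invariant Burger--Roblin measure) and $m^{\BR*}_{\nu_{\psi_u}}:=m^*_{\nu_{\psi_u},m_o}$ (the $NM$-invariant Burger--Roblin measure). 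Substituting these matches the right-hand side of the corollary.

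I do not expect a real obstacle here, since the corollary is genuinely a special case: once the $(G,2\rho)$-conformality of $m_o$ and the $\i$-invariance of $2\rho$ are in hand, Theorem \ref{thm.GE} applies verbatim. The most delicate bookkeeping step, if any, is keeping the asymmetry in the two factors straight ($\nu_{\psi_u\circ\i}$ appears as the second argument of $m_{\lambda_{\varphi_1},\nu_{\psi_u\circ\i}}$ while $\nu_{\psi_u}$ appears as the first argument of $m^*_{\nu_{\psi_u},\lambda_{\varphi_2}}$), but this is purely notational.
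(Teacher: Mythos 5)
Your proposal is correct and follows essentially the same route as the paper: the paper also obtains the corollary as the direct specialization of Theorem \ref{thm.GE} to $\varphi_1=\varphi_2=2\rho$ and $\lambda_{\varphi_1}=\lambda_{\varphi_2}=m_o$, using that $m_o$ is the $(G,2\rho)$-conformal measure, that $dx=dm_{m_o,m_o}$, and the definitions $m^{\BR}_{\nu_{\psi_u\circ\i}}=m_{m_o,\nu_{\psi_u\circ\i}}$ and $m^{\BR*}_{\nu_{\psi_u}}=m^*_{\nu_{\psi_u},m_o}$. Your explicit check that $2\rho\circ\i=2\rho$, so that $m^*_{m_o,m_o}=m_{m_o,m_o}$ by Lemma \ref{dudu}, is a small detail the paper leaves implicit but is consistent with its argument.
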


In fact, we get the following more elaborate version of the above corollary by combining the proof of \cite[Theorem 7.12]{ELO} and
the proof of Corollary \ref{above}.
\begin{thm} \label{m11}\label{prop.mixH} Let $u\in \inte\L_\Ga$.
 For any $f_1, f_2\in C_c(\Ga\ba G)$ and   $v\in \ker \psi_u$,
 $$
\begin{multlined}[t]
\lim_{t\to +\infty} t^{(r-1)/2} e^{(2\rho -\psi_u) ( tu+\sqrt tv)}  \int_{\Gamma\ba G} f_1(x \exp(tu+\sqrt tv) ) f_2(x) dx 
\\
= \kappa_u\, e^{-I(v)/2}\sum_{Z\in\frak Z_\Ga}\,m_{\nu_{\psi_u\circ \i}}^{\BR}|_{ZN^+}(f_1)\, m_{\nu_{\psi_u}}^{\BR*}|_{ZN}(f_2)
\end{multlined}
$$
where $I : \op{ker}\psi_u\to \bb R$ is given by
\begin{equation}\label{ii}
I(v):= c \cdot
\frac{\|v\|_*^2 \|u\|_*^2 -\langle v, u\ra_*^2}{\|u\|_*^2}
\end{equation} for some inner product $\langle\cdot,\cdot \ra_*$ and some $c>0$.
Moreover the left-hand sides of the above equalities are uniformly bounded 
 for all $(t,v)\in\bb (0,\infty)\times\ker\psi_u$ with $tu+\sqrt tv\in\mathfrak a^+$. 
\end{thm}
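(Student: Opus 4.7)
The plan is to reduce Theorem \ref{m11} to the refined local mixing theorem for the BMS measure $m_{\nu_{\psi_u},\nu_{\psi_u\circ\i}}$ established in \cite[Thm.~7.12]{ELO}, which already contains the Gaussian correction $e^{-I(v)/2}$ coming from the transverse direction $\sqrt{t}v \in \ker\psi_u$, and then to run the BMS-to-Haar conversion of Section \ref{s1} with $a_t = \exp(tu)$ replaced by $a_{t,v} := \exp(tu+\sqrt{t}v)$. Two algebraic observations drive everything: since $v\in\ker\psi_u$, we have $\psi_u(\log a_{t,v}) = t\psi_u(u)$, while $(2\rho)(\log a_{t,v}) = 2\rho(tu+\sqrt{t}v)$ retains the transverse part; hence the prefactor $e^{(2\rho - \psi_u)(tu+\sqrt t v)}$ in the theorem is exactly what the conversion produces when $\varphi_1 = 2\rho$ and $\psi_1 = \psi_u$.

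First I would re-derive the analogue of Proposition \ref{p1} with $a_t$ replaced by $a_{t,v}$. Writing $a_{t,v} = \exp(t u_t)$ with $u_t = u + v/\sqrt{t}$, the vector $u_t$ converges to $u \in \inte\fa^+$ for $v$ in a bounded set, so the uniform contraction $a_{t,v}^{-1} N_R a_{t,v} \subset N_\epsilon$ used in \eqref{fplus} is available for $t$ large. Using \cite[Thm.~7.12]{ELO} in place of the hypothesis \eqref{eq.hypo} yields
\begin{equation*}
\lim_{t\to\infty} t^{(r-1)/2}\!\int_{N^+}\! f(x n a_{t,v})\phi(n)\,d\mu_{gN^+,\nu_{\psi_u}}(n) = \kappa_u e^{-I(v)/2}\!\!\sum_{Z\in\frak Z_\Ga}\! \mathsf{m}|_Z(f)\,\mu_{gN^+,\nu_{\psi_u}}|_{ZN}(\phi).
\end{equation*}
Next I would run the thickening argument of Proposition \ref{p2} with $a_{t,v}$. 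The application of Lemma \ref{MANconj} introduces the factors $e^{-\varphi_1(\log a_{t,v})}$ and $e^{\psi_u(\log a_{t,v})}$, whose product gives $e^{-(\varphi_1-\psi_u)(tu+\sqrt t v)}$. The contraction $a_{t,v} N_r^+ a_{t,v}^{-1}\subset N^+_{re^{-\alpha_t t}}$ holds with $\alpha_t\to\alpha(u)>0$ (for $\|v\|$ bounded), so \eqref{eq.x1} and \eqref{eq.x2} remain valid uniformly. Assembling via the proof of Theorem \ref{prop.mixH000} and specialising to $\varphi_1=\varphi_2=2\rho$, $\lambda_{\varphi_i}=m_o$ identifies the resulting integrals with $m^{\BR}_{\nu_{\psi_u\circ\i}}|_{ZN^+}(f_1)$ and $m^{\BR*}_{\nu_{\psi_u}}|_{ZN}(f_2)$, as in the passage from Theorem \ref{thm.GE} to Corollary \ref{above}.

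The main obstacle is the uniform boundedness assertion, which must hold for \emph{all} $(t,v)$ with $tu+\sqrt t v\in\fa^+$, not merely for $v$ in compact sets. For large $\|v\|$ the direction $u_t$ can drift close to $\partial\fa^+$, degrading the contraction rate $\alpha_t$ and inflating the cutoffs $R,\epsilon_0$. The way out is that the refined BMS mixing of \cite[Thm.~7.12]{ELO} provides, as part of its local-central-limit-theorem proof, a uniform upper bound of the form $C t^{-(r-1)/2} e^{-I(v)/2}$ on the BMS correlation integral, valid uniformly in admissible $v$; this Gaussian decay in $v$ is precisely what is needed to absorb the deteriorating geometric constants. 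The technical work is therefore to track the implicit constants through the conversion: one must choose the auxiliary functions $\tilde F$, $\rho_{g,\epsilon_0}$, and the partition of unity covering $\supp f_1 \cup \supp f_2$ independently of $v$, and verify that the contraction estimates in Propositions \ref{p1}--\ref{p2} can be replaced by uniform estimates with the degradation absorbed into the Gaussian factor.
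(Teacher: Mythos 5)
Your route is essentially the paper's: the paper establishes Theorem \ref{m11} by precisely the combination you describe, taking the drift-refined, Gaussian-corrected mixing input (together with its uniform bounds) from the proof of \cite[Thm.~7.12]{ELO} and rerunning the Section \ref{s1} BMS-to-Haar conversion (Propositions \ref{p1}--\ref{p2}, Theorem \ref{prop.mixH000}, Corollary \ref{above}) with $\exp(tu+\sqrt t v)$ in place of $\exp(tu)$, using $\psi_u(\sqrt t v)=0$ and specializing to $\varphi_i=2\rho$, $\lambda_i=m_o$. The paper gives no more detail than this one-line combination, so your elaboration — including extracting the uniformity in $v$ from the proof in \cite{ELO} rather than from a quoted statement — is consistent with, and fleshes out, the paper's own argument.
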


\section{Proof of Theorem \ref{main}}
Let $\Ga<G$ be a Zariski dense Anosov subgroup with respect to $P$.

\subsection*{The $u$-balanced measures}
%Fix $u\in \inte\L_\Ga$.
%Let $\psi=\psi_u$ and $\nu_\psi, \nu_{\psi\circ \i}$ be as defined in the beginning of section \ref{sec:lo}.

Let $\Omega=\{[g]\in \Ga\ba G: g^{\pm}\in \La\}$.
Following \cite{BLLO}, given $u\in \inte \L_\Ga$,
we say that
a locally finite Borel measure $\mathsf m_0$ on $\Gamma\ba G$ is $ u$-balanced if
$$
\limsup_{T\to+\infty}\frac{\int_0^T\mathsf m_0(\cal O_1\cap\cal O_1\exp(tu))\,dt}{\int_0^T\mathsf m_0(\cal O_2\cap\cal O_2\exp(tu))\,dt}<\infty,
$$
for all bounded $M$-invariant Borel subsets $\cal O_i\subset\Ga\ba G$ with $\Omega\cap \op{int}\cal O_i\ne \emptyset$, $i=1,2$.

As an immediate corollary of Theorem \ref{thm.GE}, we get
\begin{cor}\label{bal} Let $\varphi\in \fa^*$.
For any pair 
$(\lambda_{\varphi}$, $\lambda_{\varphi\circ \i})$ of
$(\Ga,\varphi)$ and $(\Ga,\varphi\circ \i)$-conformal measures on $\cal F$ respectively, the corresponding BMS-measure
$m_{\lambda_{\varphi},\lambda_{\varphi\circ \i}}$ is $u$-balanced for any $u\in \inte\L_\Ga$.
\end{cor}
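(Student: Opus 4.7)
The plan is to apply the local mixing result Theorem~\ref{thm.GE} (more precisely, the $a_{-t}$ variant indicated in the Remark after Theorem~\ref{prop.mixH000}, which carries over verbatim to the Theorem~\ref{thm.GE} setting) in order to extract a sharp asymptotic for the correlation $m_{\la_\varphi,\la_{\varphi\circ\i}}(\cal O\cap \cal O\exp(tu))$ as $t\to+\infty$, and then read off the $u$-balanced property from the fact that the asymptotic \emph{rate} is independent of the set $\cal O$. First, with $\varphi_1:=\varphi$ and $\varphi_2:=\varphi\circ\i$, the relation $\varphi_2=\varphi_1\circ\i$ puts us in the setting of Lemma~\ref{dudu}, so $m_{\la_\varphi,\la_{\varphi\circ\i}}=m^*_{\la_\varphi,\la_{\varphi\circ\i}}$. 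Sandwiching $\mathbbm{1}_{\cal O}$ from above and below by nonnegative $f_i\in C_c(\Ga\ba G)$, Theorem~\ref{thm.GE} then yields the existence of a function $\Phi(t)$, a polynomial times an exponential in $t$ depending only on $u,\varphi,\psi_u,r$, such that for every bounded $M$-invariant Borel set $\cal O\subset \Ga\ba G$ with $\Omega\cap \inte \cal O\neq\emptyset$,
\[
0<c^-(\cal O)\leq \liminf_{t\to+\infty}\Phi(t)\,m_{\la_\varphi,\la_{\varphi\circ\i}}(\cal O\cap \cal O\exp(tu))\leq \limsup_{t\to+\infty}\Phi(t)\,m_{\la_\varphi,\la_{\varphi\circ\i}}(\cal O\cap \cal O\exp(tu))\leq c^+(\cal O)<\infty.
\]

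The delicate step is the strict positivity $c^-(\cal O_2)>0$, which is precisely where the hypothesis $\Omega\cap \inte \cal O_2\neq\emptyset$ enters. I would fix a point $p\in \Omega\cap \inte \cal O_2$ and choose continuous $0\leq f_1,f_2\leq \mathbbm{1}_{\cal O_2}$ with $f_i(p)>0$. Since $\La$ is the unique $\Ga$-minimal subset of $\cal F$ \cite{Ben,LO}, one has $\La\subseteq \supp\la_\varphi\cap \supp\la_{\varphi\circ\i}$; combined with $\supp\nu_{\psi_u}=\supp\nu_{\psi_u\circ\i}=\La$ and the product structure \eqref{eq.BMS0}, \eqref{eq.BMS*} of the generalized BMS measures in the Hopf coordinates, this shows that both $m^*_{\nu_{\psi_u\circ\i},\la_{\varphi\circ\i}}$ and $m_{\la_\varphi,\nu_{\psi_u}}$ assign positive mass to every neighborhood of $p$. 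Picking any $Z\in\frak Z_\Ga$ with $p\in Z$ (using $\Omega=\bigcup_{Z\in\frak Z_\Ga}Z$), the corresponding summand on the right-hand side of the mixing limit is then strictly positive, giving $c^-(\cal O_2)>0$.

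Finally, the displayed asymptotic forces the pointwise ratio $m_{\la_\varphi,\la_{\varphi\circ\i}}(\cal O_1\cap \cal O_1\exp(tu))/m_{\la_\varphi,\la_{\varphi\circ\i}}(\cal O_2\cap \cal O_2\exp(tu))$ to be bounded by $2c^+(\cal O_1)/c^-(\cal O_2)$ for all sufficiently large $t$. A standard Stolz--Ces\`aro argument, handling separately the cases where both $\int_0^T m(\cal O_i\cap \cal O_i\exp(tu))\,dt$ diverge and where both converge as $T\to+\infty$, then yields the desired finite $\limsup$ of the ratio of cumulative integrals. The main obstacle is the positivity $c^-(\cal O_2)>0$; the underlying mixing statement and the bookkeeping that the asymptotic rate $\Phi(t)$ is the same for $\cal O_1$ and $\cal O_2$ form the conceptual core of the argument.
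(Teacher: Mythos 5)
Your proposal is correct and follows essentially the same route as the paper: both apply Theorem \ref{thm.GE} (via Lemma \ref{dudu} to identify $m$ with $m^*$) to continuous functions sandwiching $\mathbbm{1}_{\cal O_i}$, use that the normalizing factor $t^{(r-1)/2}e^{(\varphi-\psi_u)(tu)}$ is the same for both sets so it cancels in the ratio, and secure positivity of the denominator from $\Omega\cap\inte\cal O_2\neq\emptyset$ together with $\supp\nu_{\psi_u}=\La\subseteq\supp\la_\varphi$. Your extra remarks on the positivity of the $Z$-summand and on passing from the pointwise ratio to the ratio of cumulative integrals fill in details the paper leaves implicit, but do not change the argument.
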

\begin{proof} Let $\cal O_1,\cal O_2$ be $M$-invariant Borel subsets such that $\Omega\cap \inte \cal O_i\ne\emptyset$ for each $i=1,2$.
Let $f_1, f_2\in C_c(\Ga\ba G)$ be non-negative functions such that $f_1\ge 1$ on $\cal O_1$
and $f_2\le 1$ on $\cal O_2$ and $0$ outside $\cal O_2$.
Since $\inte \cal O_2\cap \Omega\ne \emptyset$, we may choose  $f_2$ so that $m_{\nu_{\psi_u},\lambda_{\varphi\circ\i}}^*( f_2)>0$. For simplicity, we set $\mathsf m_0= m_{\lambda_{\varphi},\lambda_{\varphi\circ \i}}$.
By Theorem \ref{thm.GE} and using the fact that $\mathsf m_0$ is $A$-quasi-invariant, we obtain that for any $u\in \inte\L_\Ga$,
\begin{align*}
&\limsup_{t\to+\infty}\frac{\mathsf m_0(\cal O_1\cap\cal O_1\exp(tu))}{\mathsf m_0(\cal O_2\cap\cal O_2\exp(tu))}  \\
&\le \limsup_{t\to+\infty}\frac{\int f_1 (x) f_1 (x\exp (-tu)) d\mathsf m_0(x) }{\int f_2 (x) f_2 (x\exp (-tu)) d\mathsf m_0 (x) }\\
&=\limsup_{t\to+\infty}\frac{\int f_1 (x) f_1 (x\exp (tu)) d\mathsf m_0(x) }{\int f_2 (x) f_2 (x\exp (tu)) d\mathsf m_0 (x) }\\
&=\limsup_{t\to+\infty}\frac{t^{(r-1)/2} e^{(\varphi-\psi_u)(tu)}\int f_1 (x) f_1 (x\exp (tu)) d\mathsf m_0(x) }{t^{(r-1)/2} e^{(\varphi-\psi_u)(tu)}\int f_2 (x) f_2 (x\exp (tu)) d\mathsf m_0 (x) }\\
&=\frac{m_{\lambda_{\varphi},\nu_{\psi_u\circ\i}}( f_1)}{m_{\nu_{\psi_u},\lambda_{\varphi\circ\i}}^*( f_2)} <\infty .
\end{align*}
This shows that $\mathsf m_0$ is $u$-balanced.
\end{proof}
Recall Theorem \ref{main} from the introduction:
%\begin{thm}\label{main2} 

%\noindent{\bf Theorem \ref{main}}
\begin{thm} Let $\op{rank}G\leq 3$.
For any $\psi\in \dg$, any $(\Ga, \psi)$-conformal measure on $\F$
is necessarily supported on $\La$. Moreover, the
$\PS$ measure $\nu_\psi$ is
the unique $(\Ga, \psi)$-conformal measure on $\F$.
\end{thm}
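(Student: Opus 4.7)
\textbf{Proof plan for Theorem \ref{main}.} Let $\lambda$ be any $(\Ga,\psi)$-conformal measure on $\cal F$ with $\psi\in\dg$. The goal is to show that $\lambda$ is supported on $\La$; once this is established, the uniqueness of $(\Ga,\psi)$-PS measures for Anosov subgroups (\cite[Thm.\ 1.3]{LO}) immediately yields $\lambda=\nu_\psi$.

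\emph{Set-up.} By the definition of $\dg$, there is a unit vector $u\in\inte\L_\Ga$ with $\psi(u)=\psi_\Ga(u)$. Since $\i$ preserves $\inte\L_\Ga$ and $\psi_\Ga\circ\i=\psi_\Ga$, one checks that $\psi\circ\i\in\dg$; let $\nu_{\psi\circ\i}$ denote the unique $(\Ga,\psi\circ\i)$-PS measure. Form the generalized BMS measure
\[
\mathsf m_\lambda\;:=\;m_{\lambda,\,\nu_{\psi\circ\i}}
\]
on $\Ga\ba G$, built from the conformal pair $(\lambda,\nu_{\psi\circ\i})$.

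\emph{$u$-balancedness.} Apply Theorem \ref{thm.GE} with $\varphi_1=\psi$, $\varphi_2=\psi\circ\i$, $\lambda_{\varphi_1}=\lambda$, $\lambda_{\varphi_2}=\nu_{\psi\circ\i}$, and reason exactly as in the proof of Corollary \ref{bal}. The common normalizing factor $t^{(r-1)/2}e^{(\psi-\psi_u)(tu)}$ cancels between numerator and denominator in the ratio defining the $u$-balanced condition, leaving a finite positive ratio of PS-type integrals $m_{\lambda,\nu_{\psi\circ\i}}(f_1)/m_{\nu_\psi,\nu_{\psi\circ\i}}^{*}(f_2)$. Hence $\mathsf m_\lambda$ is $u$-balanced for every $u\in\inte\L_\Ga$.

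\emph{Rank dichotomy and support of $\lambda$.} Now invoke the rank dichotomy theorem of Burger--Landesberg--Lee--Oh \cite{BLLO}, which under the hypothesis $\op{rank} G\le 3$ classifies $u$-balanced Radon measures of BMS type on $\Ga\ba G$: any such measure is necessarily supported on the $u$-directional part of $\Omega=\{[g]:g^\pm\in\La\}$. Unfolding this support conclusion through the Hopf parametrization in which $\mathsf m_\lambda$ is a product of $\lambda$, $\nu_{\psi\circ\i}$, and Lebesgue measure on $\fa$, one reads off that the boundary factor $\lambda$ must be supported on the $u$-radial limit set $\La_u\subset\La$. This gives the stronger conclusion mentioned in the remark following Theorem \ref{main}. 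Step 4 (uniqueness) is immediate from \cite[Thm.\ 1.3]{LO}, since $\lambda$ is now a $(\Ga,\psi)$-PS measure.

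\emph{Main obstacle.} The only substantive step is the application of the \cite{BLLO} rank dichotomy, both in matching our setting to its hypotheses (notably the required invariance properties and the boundedness of the test sets $\cal O_i$) and in transferring the support statement for $\mathsf m_\lambda$ on $\Ga\ba G$ back to a support statement for $\lambda$ on $\cal F$. The rank restriction $\op{rank} G\le 3$ enters only here; the other steps rely solely on Theorem \ref{thm.GE}, Corollary \ref{bal}, and the existing uniqueness theory for PS measures on $\La$.
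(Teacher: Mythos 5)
Your plan is essentially the paper's proof: form the generalized BMS measure $m_{\lambda,\nu_{\psi\circ\i}}$, show it is $u$-balanced exactly as in Corollary \ref{bal} via Theorem \ref{thm.GE}, feed this into the Hopf--Tsuji--Sullivan dichotomy of \cite{BLLO}, and conclude uniqueness from \cite[Thm. 1.3]{LO}. The one point to state more precisely is the last step: \cite{BLLO} is not used as a rank-restricted support classification of $u$-balanced measures needing an unfolding through the Hopf parametrization, but rather via the equivalence (6)$\Leftrightarrow$(7) of \cite[Thm. 1.4]{BLLO} (valid because $m_{\lambda,\nu_{\psi\circ\i}}$ is $u$-balanced), namely $\lambda(\La_u)=1$ if and only if $\sum_{\ga\in\Ga_{u,R}}e^{-\psi(\mu(\ga))}=\infty$ for some $R>0$, with the hypothesis $\op{rank}G\le 3$ entering only through \cite[Thm. 6.3]{BLLO}, which supplies this divergence and hence directly gives $\lambda(\La_u)=1$.
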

\begin{proof} Let $u\in \inte\L_\Ga$ denote the unique unit vector such that $\psi(u)=\psi_\Ga(u)$, that is, $\psi=\psi_u$.
Let $\lambda_\psi$ be any $(\Ga, \psi)$-conformal measure on $\F$. We claim that $\lambda_\psi$ is supported on $\La$.
The main ingredient is the higher rank Hopf-Tsuji-Sullivan dichotomy established in \cite{BLLO}.
The main point is that all seven conditions
of Theorem 1.4 of \cite{BLLO} are equivalent to each other for Anosov groups
and $u\in \inte \L_\Ga$, since all the measures considered there are $u$-balanced by Corollary \ref{bal}. In this proof,
we only need the equivalence of (6) and (7), which we now recall.

Consider the following $u$-directional conical limit set of $\Ga$:
\be\label{lau} \La_u:=\{g^+\in\La: \ga_i\exp(t_iu)\text{ is bounded for some $t_i\to +\infty$ and $\ga_i\in \Ga$} \}.\ee 
Note that $\La_u\subset \La$.
For $R>0$, we set $\Ga_{u,R}:=\{\ga\in\Ga: \norm{\mu(\ga)-\br u}<R\}$. Applying the dichotomy \cite[Thm. 1.4]{BLLO} to a $u$-balanced measure $m{}_{\lambda_\psi, \nu_{\psi\circ\i}}$, we deduce
\begin{prop}\label{thm.Rob}\label{dio}
%Let $\psi\in \dg$ be such that $\psi(u)=\psi_\Ga(u)$ for $u\in \inte\L_\Ga$.
The following conditions are equivalent for  $\la_\psi$:
\begin{enumerate}
\item  $\lambda_{\psi}(\La_u)=1$;
\item 
$\sum_{\ga\in\Ga_{u,R}} e^{-\psi(\mu(\ga))}=\infty$ for some $R>0$.
\end{enumerate}

\end{prop}
On the other hand, if $\op{rank}G\leq 3$, we have
$$\sum_{\ga\in\Ga_{u,R}} e^{-\psi(\mu(\ga))}=\infty$$ for some $R>0$  \cite[Thm. 6.3]{BLLO}.
Therefore, by Proposition \ref{dio},
we have $\lambda_\psi(\La_u)=1$ and hence $\lambda_\psi$ is supported on $\La$ in this case.
This finishes the proof of the first part of Theorem \ref{main}.
The second claim follows from the first one by \cite[Thm. 1.3]{LO}.
\end{proof}


\begin{thebibliography}{10}

\bibitem{Ben}Y.\ Benoist. \newblock{\em Propri\'et\'es asymptotiques des groupes lin\'aires.}
\newblock{Geom. Funct. Anal. 7 (1997), no. 1, 1--47.}


\bibitem{BLLO} M. Burger, O. Landesberg, M. Lee and H. Oh.
\newblock{\em The Hopf-Tsuji-Sullivan dichotomy for Anosov groups in low and high rank.}
\newblock{Preprint, arXiv:2105.13930. }



\bibitem{CS}  M. Chow and P. Sarkar.
\newblock{\em Local mixing of one-parameter diagonal flows on Anosov homogeneous spaces.}
\newblock{Preprint, arXiv:2105.11377.}




\bibitem{ELO} S. Edwards, M. Lee and H. Oh.
\newblock{\em Anosov groups: local mixing, counting, and equidistribution.}
\newblock{Preprint,  arXiv:2003.14277, to appear in Geometry \& Topology.}



\bibitem{GW} O. Guichard and A. Wienhard.
\newblock{\em Anosov representations: Domains of discontinuity and applications.}
\newblock{Inventiones Math., Volume 190, Issue 2 (2012), 357--438.}


\bibitem{La} F. Labourie.
\newblock{\em Anosov flows, surface groups and curves in projective space.}
\newblock{Invent. Math. 165 (2006), no. 1, 51--114.}

\bibitem{LO} M. Lee and H. Oh. 
\newblock{\em Invariant measures for horospherical actions and Anosov groups.}
\newblock{Preprint, arXiv:2008.05296.}

\bibitem{LO2} M. Lee and H. Oh. 
\newblock{\em Ergodic decompositions of geometric measures on Anosov homogeneous spaces.}
\newblock{Preprint, arXiv:2010.11337.}


\bibitem{Pa} 
S. Patterson.
\newblock{\em The limit set of a Fuchsian group.}
\newblock{Acta Math. 136 (1976),  241--273.}

\bibitem{Pr}  G. Prasad.
\newblock{\em $\br$-regular elements in Zariski dense subgroups.}
\newblock{ Oxford quaterly journal of of mathematics 45 (1994), 541-545.}


\bibitem{Quint1}
J.-F. Quint.
\newblock{\em Divergence exponentielle des sous-groupes discrets en rang sup\'erieur.}
\newblock{Comment. Math. Helv. 77 (2002), no. 3, 563--608.}

\bibitem{Quint2} 
J.-F. Quint.
\newblock{\em Mesures de Patterson-Sullivan en rang sup\'erieur.}
\newblock{Geom. Funct. Anal. 12 (2002), no. 4, 776--809.}




\bibitem{Su} D. Sullivan.
\newblock{\em The density at infinity of a discrete group of hyperbolic motions.}
\newblock{Publ. IHES. No. 50 (1979), 171--202.}



\end{thebibliography}
\end{document}